\numberwithin{equation}{section} \theoremstyle{plain}
\newtheorem*{thm*}{Main Theorem}
\newtheorem{theorem}{Theorem}[section]
\newtheorem{corollary}[theorem]{Corollary}
\newtheorem*{corollary*}{Corollary}
\newtheorem{lemma}[theorem]{Lemma}
\newtheorem*{lemma*}{Lemma}
\newtheorem{proposition}[theorem]{Proposition}
\newtheorem*{proposition*}{Proposition}
\newtheorem*{remark*}{Remark}
\newtheorem{definition}[theorem]{Definition}
\newtheorem*{definition*}{Definition}
\newtheorem*{example*}{Example}
\newtheorem*{acknowledgements*}{ACKNOWLEDGEMENTS}
\newcommand{\Ext}{\mbox{\rm Ext}}
\newcommand{\Hom}{\mbox{\rm Hom}}
\newcommand{\End}{\mbox{\rm End}}
\newcommand{\Coker}{\mbox{\rm Coker}}
\newcommand{\G}{\mbox{\rm G-dim}}
\newcommand{\pd}{\mbox{\rm pd}}
\newcommand{\gen}{\mbox{\rm gen}}
\newcommand{\add}{\mbox{\rm add}}
\newcommand{\cogen}{\mbox{\rm cogen}}
\newcommand{\f}{\rm fadim}
\newcommand{\m}{\rm Im}
\newcommand{\Tr}{\rm Tr}
\def\mod{\mathop{\rm mod}\nolimits}
\def\mod{\mathop{\rm mod}\nolimits}
\title{WTCSE}
\begin{document}
\begin{center}
{\large  \bf
$\omega$-left approximation dimensions under Stable equivalence}
\footnote {This research was partially supported by NSFC (Grant No. 12061026)
and Foundation for University Key Teacher by Henan Province (2019GGJS204).}
\\ \vspace{0.6cm} {Juxiang Sun$^{a}$, Guoqiang Zhao$^b$, Junling Zheng$^c$}
\footnote {}
\\ \footnotesize {$^{a}$\it School of Mathematics and Statistics, Shangqiu Normal University, Shangqiu 476000, China\\ E-mail: Sunjx8078@163.com}
\\ \footnotesize {$^{b}$\it School of Science, Hangzhou Dianzi University, Hangzhou, 310018, China\\ E-mail: gqzhao@hdu.edu.cn}
\\ \footnotesize {$^{c}$\it  Department of Mathematics, China Jiliang University, Hangzhou, 310018, Zhejiang Province, PR China\\ E-mail: zhengjunling@cjlu.edu.cn}\\
\end{center}

\bigskip
\noindent{ \bf  Abstract.}
In this paper, we investigate some transfer properties of $\omega$-left approximation dimensions of modules 
of stably equivalent Artin algebras having neither nodes nor semisimple direct summands. 
As applications, we give a one-to-one correspondence between basic (Wakamatsu) tilting modules, 
and prove that the Wakamatsu tilting conjecture is preserved under those equivalences.

\vspace{0.2cm}

\noindent {\bf 2020 Mathematics Subject Classification:} 16D20, 16E30

\noindent {\bf Keywords:} $\omega$-left approximation dimension, stable equivalence, faithful dimension, Wakamatsu tilting module, Wakamatsu tilting conjecture, relative $n$-torsionfree modules

%%%%%%%%%%%%%%%%%%%%%%%%%%%%%%%%%%%%%%%%%%%%%%%%%%%%%%%%%%%%%%%%%%%%%
%%%%%%%%%%%%%%%%%%%%%    Section 1   %%%%%%%%%%%%%%%%%%%%%%%%%%%%
%%%%%%%%%%%%%%%%%%%%%%%%%%%%%%%%%%%%%%%%%%%%%%%%%%%%%%%%%%%%%%%%%%%%%

\section{\bf Introduction}

Let $\Lambda$ be an Artin algebra and let $\omega$ and $X$ be finitely generated left $\Lambda$-modules. Suppose that there is a complex
$$\eta: \quad 0\to X\stackrel{f_1}\to \omega_1\stackrel{f_2}\to \omega_2\stackrel{f_3}\to \cdots\stackrel{f_i}\to \omega_i\stackrel{f_{i+1}}\to \cdots$$
with each $\omega_i\in \add \omega$, where $\add \omega$ is the subclass of $\Lambda$-modules consisting of all modules isomorphic to direct summands of finite copies of $\omega$, such that
$\m f_i\hookrightarrow \omega_i$ is a left $\add \omega$-approximation of $\m f_i$, for all $i$.
Let $\eta_{n}$ denote the truncated complex ending $\omega_n$ obtained from $\eta$.
$X$ is said to {\it have a $\omega$-left approximation dimension $n$}, denoted by l.app$_{\omega}(X)=n$, if $n$ is the largest positive integer such that $\eta_{n}$ is exact. If $\eta$ is exact,
then $X$ is said to {\it have  infinite $\omega$-left approximation dimension}, denoted by l.app $_{\omega}(X)= \infty$.
The $\omega$-left approximation dimension of $_\Lambda\Lambda$ is just the faithful dimension of $\omega$ defined by Buan and Solberg in \cite{BS}, denoted by
$\f_{\Lambda}\omega$, which is used to describe the number of nonisomorphic indecomposable complements of an almost cotilting module.

The notion of $\omega$-left approximation dimensions of modules was introduced by Huang \cite{Hu1}, which plays an important role in homological algebras and relative homological algebras. It is well known that Wakamatsu tilting modules, (relative) torsionfree modules, and modules having generalized Gorenstein dimension zero relative to a Wakamatsu tilting module are characterized in terms of left approximation dimensions (see \cite{BS, Hu1}).

As an essential link between the representation theory of Artin algebras, stable equivalence was introduced by Auslander and Reiten in \cite{AR1}. Two stably equivalent Artin algebras having neither nodes nor semisimple direct summands share many interesting invariants, such as the rigidity dimension, the extension dimension, the global dimension, stable Grothendieck groups, the dominant dimension,  the projective dimension, and so on (see \cite{CG, G, M1} for details). 

The first part of this paper investigates the transfer properties of $\omega$-left approximation dimensions of modules under stable equivalences. One of our main results is the following theorem (see Theorem 3.5). 

\vspace{0.2cm}

{\bf Theorem A}
{\it Let $\Lambda$ and $\Gamma$  be stably equivalent Artin algebras having neither nodes nor semisimple direct summands, and let $F: \underline{\mod \Lambda}\to \underline{\mod \Gamma}$ be the equivalent functor and $F'=\tau_{\Gamma}\circ  F \circ\tau_{\Lambda}^{-1}$.
Let $\omega=X\oplus I\oplus P$ be a $\Lambda$-module satisfying $\Ext_{\Lambda}^{1}(\omega,\omega)=0$,  where $X\in \mod_{\mathcal{I}}\Lambda, I\in \add\mathcal{I}(\Lambda)_{\mathcal{P}}$ and $P$ is a projective-injective $\Lambda$-module, and let $\nu=F'(X)\oplus F(I)\oplus Q$ with $Q$ the direct sum of all nonisomorphic  indecomposable projective-injective $\Gamma$-modules. Suppose that $M=Y\oplus I'\oplus P'$ is a $\Lambda$-module, where $Y\in \mod_{\mathcal{I}}\Lambda, I'\in \add \mathcal{I}(\Lambda)_{\mathcal{P}}$ and $P'$ is a projective-injective $\Lambda$-module. Taking
$N=F'(Y)\oplus F(I)\oplus Q'$ with $Q'$ a projective-injective $\Gamma$-module, then we have
    $$\mathrm{l.app}_{\omega}M=\mathrm{l.app}_{\nu}N.$$}
    
\vspace{0.2cm}

It is well known that tilting modules are central to the tilting theory.
The classical concept of tilting modules was introduced by Brenner and Butler in \cite{BB}, and extended to modules of finite projective dimension by Miyashita \cite{M}. Wakamatsu \cite{W} further generalized this notion to modules of infinite projective dimension, which are commonly called Wakamatsu tilting modules, followed by the terminology in \cite{GRS}. The Wakamatsu tilting conjecture, posed by Beligiannis and Reiten in \cite[Chapter III]{BR}, states that {\it a Wakamatsu tilting module with finite projective dimension is a tilting module.}
This conjecture is significant in the representation theory of Artin algebras, and is closely related to several homological conjectures, such as the finitistic dimension conjecture, the Nakamaya conjecture, the Gorenstein symmetry conjecture, and so on (see \cite{EM, MR, W2, W3} for details).
 Li and Sun \cite{LS} proved that a stronger class of stable equivalences, called {\it stable equivalences of adjoint type}, preserves the partial tilting modules.  In \cite{SZZ}, we improved this result by showing that classical tilting modules and tilted algebras are
preserved under stable equivalence.
Using Theorem A, we give a one-to-one correspondence between basic (Wakamatsu) tilting modules and prove the following results (see Theorem 3.12).

\vspace{0.2cm}

{\bf Theorem B} {\it let $\Lambda$ and $\Gamma$  be stably equivalent Artin algebras having neither nodes nor semisimple direct summands. Then $\Lambda$ satisfies the Wakamatsu tilting conjecture if and only if  $\Gamma$ does so.}

\vspace{0.2cm}

Finally, the invariant properties of relative $n$-torsionfree modules and the generalized G-dimension under stable equivalences are discussed by using Theorem A.

The paper is organized as follows. In Section 2, we provide preliminary definitions and results.
 Section 3 and 4 are devoted to the proofs of Theorems A and B.

%%%%%%%%%%%%%%%%%%%%%%%%%%%%%%%%%%%%%%%%%%%%%%%%%%%%%%%%%%%%%%%%%%%%%
%%%%%%%%%%%%%%%%%%%%%    Section 2   %%%%%%%%%%%%%%%%%%%%%%%%%%%%
%%%%%%%%%%%%%%%%%%%%%%%%%%%%%%%%%%%%%%%%%%%%%%%%%%%%%%%%%%%%%%%%%%%%%

\section { \bf Preliminaries}
In this section, we recall some notations and collect some fundamental results.
Throughout this paper, all rings are Artin algebras over a commutative ring $R$, 
and all modules are finitely generated left $\Lambda$-modules.
Let $\Lambda$ be an Artin algebra and $\omega$ a $\Lambda$-module. 
We use $\mod \Lambda$ to denote the category consisting of all finitely generated $\Lambda$-modules, 
and $\add \omega$ to denote the full subcategory of $\mod \Lambda$ consisting of
all modules isomorphic to direct summands of finite copies of $\omega$. 
We denote by $\gen \omega$ (resp. $\cogen \omega$) the full subcategory of $\mod \Lambda$ 
having objects $X$ such that there is an epimorphism $\omega_0\to X$ with $\omega_0\in \add \omega$ 
(resp. such that there is a monomorphism $X\to \omega_0$ with $\omega_0\in \add \omega$).

A homomorphism $f: X\to \omega_0$ with $\omega_0\in \add \omega$ is called {\it a left $\add \omega$-approximation} of $X$,
if $\Hom_{\Lambda}(\omega_0, -)\to \Hom_{\Lambda}(X, -)$ is exact in $\add \omega$. 
And $f$ is called {\it a left minimal $\add \omega$-approximation} of $X$,
if it is also left minimal, that is, $h \in \End X$  is an automorphism whenever $ fh= f$ (see \cite{ARS,ARS1}).

\begin{lemma}
    Let $M$ and  $\omega$ be $\Lambda$-modules and $M=M_1\oplus M_2$. 
    
    $\mathrm{(1)}$(\cite{ARS}) Suppose that $f: M\to \omega_0$ is a left $\add \omega$-approximation of $M$, 
    then there exists  a left minimal   $\add \omega$-approximation $g: M_1\to \omega_1$, such that $\omega_1\in \add \omega_0$.

    $\mathrm{(2)}$ $\mathrm{l.app}_{\omega}M_1\geq \mathrm{l.app}_{\omega}M$.
\end{lemma}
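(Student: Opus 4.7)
Part (1) being quoted from \cite{ARS}, the task is to prove (2). My plan is to show that a left $\add\omega$-approximation complex for $M=M_1\oplus M_2$ built from left minimal approximations splits as a direct sum of such complexes for $M_1$ and $M_2$; exactness of the truncation $\eta_n$ will then transfer to each summand, establishing the desired inequality.

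Concretely, assume $\mathrm{l.app}_\omega M\geq n$ with witness
\[
\eta:\ 0\to M\xrightarrow{f_1}\omega_1\xrightarrow{f_2}\omega_2\to \cdots\to \omega_n.
\]
First, using part (1) repeatedly, I would replace $\eta$ by a complex $\hat\eta$ built from left minimal $\add\omega$-approximations, while preserving exactness of the truncation $\hat\eta_n$ (the non-minimal contributions are peeled off as direct summands in $\add\omega$ by part (1)). Then the Krull--Schmidt property together with uniqueness up to isomorphism of left minimal approximations forces the initial map $\hat f_1:M_1\oplus M_2\to \hat\omega_1$ to decompose as a direct sum $\hat f_1^{(1)}\oplus \hat f_1^{(2)}$, where $\hat\omega_1 = \hat\omega_1^{(1)}\oplus \hat\omega_1^{(2)}$ and each $\hat f_1^{(j)}$ is a left minimal $\add\omega$-approximation of $M_j$. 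Consequently $\im \hat f_1 = \im \hat f_1^{(1)}\oplus \im \hat f_1^{(2)}$, so the minimal approximation used to form $\hat f_2$ again splits into minimal approximations of the summands. Iterating through positions $1,\dots,n$ yields
\[
\hat\eta = \hat\eta^{(1)}\oplus \hat\eta^{(2)},
\]
where each $\hat\eta^{(j)}$ is a left $\add\omega$-approximation complex for $M_j$. Exactness of $\hat\eta_n$ implies exactness of each direct summand $\hat\eta^{(1)}_n$, so $\mathrm{l.app}_\omega M_1\geq n$; the case $\mathrm{l.app}_\omega M=\infty$ is handled by applying the argument for every finite $n$.

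The main obstacle I anticipate is the initial reduction from $\eta$ to its minimal counterpart $\hat\eta$ while preserving exactness of the truncation: one must verify that, at each stage of the complex, the non-minimal direct summand in $\add\omega$ can be discarded without disrupting the kernel--image relations farther along. This requires a careful inductive bookkeeping combining part (1) with the compatibility of successive Krull--Schmidt splittings. Once this reduction is in place, the direct-sum decomposition of the complex iterates cleanly and the inequality $\mathrm{l.app}_\omega M_1 \geq \mathrm{l.app}_\omega M$ follows immediately from the fact that a direct summand of an exact complex is exact.
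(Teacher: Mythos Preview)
Your proposal is correct; the paper's own proof of (2) is just the one-line assertion that it follows from (1) and the definition, and your argument is a faithful and more explicit rendering of that idea. The reduction to the minimal complex that you flag as an obstacle works precisely because at each stage the minimal left $\add\omega$-approximation is a direct summand (as a morphism) of the given injective approximation, hence itself injective, and its cokernel is a direct summand of the next image up to a term in $\add\omega$, so the induction carries through.
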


\begin{proof}
(1) See \cite[$\mathrm{P}_7$, Theorem 2.2]{ARS}.

(2) It follows from the definition of $\omega$-left approximation dimension of $M$ and (1) directly.    
\end{proof}

Let $n$ be a positive integer.
Recall from \cite{BB} that a $\Lambda$-module $\omega$ is said to be {\it $n$-tilting}
if the following conditions are satisfied. (1) $\omega$ is self-orthogonal, that is,
$\Ext_{\Lambda}^{\geq 1}(\omega,\omega)=0$; (2) $\pd_{\Lambda}\omega=n< \infty$; (3)  there exists an exact sequence
$0\to _\Lambda \Lambda\to \omega_0\to \omega_1\to \cdots\to \omega_n \to 0$ with $\omega_i\in \add \omega$ for $0\leq i\leq n$. 
$\omega$ is said to be {\it a tilting module}, if it is $n$-tilting for some positive integer $n$. 
Recall from \cite{W} that a $\Lambda$-module $\omega$ is called {\it a Wakamatsu tilting module} 
if it is self-orthogonal and $\f_{\Lambda}\omega=\infty$.
%Beligiannis and Reiten proposed in \cite[Chapter III]{BR} the following conjecture.
%{\bf Wakamast tilting conjecture:}  {\it Let $\Lambda$ be an Artin algebra. Suppose that $\omega$ is a Wakamtsu tilting $\Lambda$-module
%with $\pd_{\Lambda}\omega<\infty$, then $\omega$ is a tilting $\Lambda$-module.}

Let $\omega$ be a $\Lambda$-module and $n$ a positive integer.  
The notion of $\omega$-$n$-torsionfree module was introduced by Huang in \cite{Hu1}, 
as a nontrivial generalization of $n$-torsionfree modules defined in \cite{AB}. 
We refer the reader to \cite{Hu1} for the original definition, 
and here we use the following characterization, which is also proved in \cite{Hu1}.

\begin{definition}
    Let $\omega$ be a $\Lambda$-module with $\f_{\Lambda}\omega\geq n+2$. A $\Lambda$-module $X$ is said to be $\omega$-$n$-torsionfree, if $\mathrm{l.app}_{\omega}X=n$.
\end{definition}

In case $_{\Lambda}\omega=_{\Lambda}\Lambda$, the $\omega$-$n$-torsionfree module defined above is just the $n$-torsionfree module defined in \cite{AR2}.

Let $\omega$ be a Wakamatsu tilting $\Lambda$-module with the endomorphism algebra $\Pi=\End_{\Lambda}\omega$. 
Recall from \cite{AR3} that a $\Lambda$-module $X$ is said to {\it have generalized Gorenstein dimension zero relative to $\omega$}, 
denoted by $\G_{\omega}X=0$, if the following data is satisfied. 
 (1) $X$ is $\omega$-reflexive, that is, the evaluation map $\sigma_{X}: X\to \Hom_{\Pi^o}(\Hom_{\Lambda}(X, \omega), \omega)$ 
 via $\sigma_X(x)(f)=f(x)$, for any $f\in \Hom_{\Lambda}(X,\omega)$ and $x\in X$, is an isomorphism; 
 (2) $\Ext_{\Lambda}^i(X,\omega)=0=\Ext_{T^{op}}^i(\Hom_{\Lambda}(X,\omega), \omega)$ for any $i\geq 1$. 
We denote by $\mathcal{G}_{\omega}(\Lambda)$ the subcategory of $\mod \Lambda$ consisting of all modules 
having generalized Gorenstein dimension zero with respect to $\omega$. 
According to \cite[Lemma 5.1]{BLZ}, 
a $\Lambda$-module $X$ has  generalized Gorenstein dimension zero with respect to $\omega$ 
if and only if $X$ is $\omega$-$\infty$-torsionfree and $\Ext_{\Lambda}^{\geq 1}(X,\omega)=0$. 
Clearly, every projective $\Lambda$-module is in $\mathcal{G}_{\omega}(\Lambda)$ for any Wakamatsu tilting $\Lambda$-module $\omega$.

In case $_{\Lambda}\Lambda=_{\Lambda}\omega$, a $\Lambda$-module $G$ having generalized Gorenstein dimension zero 
with respect to $\omega$ is just a $\Lambda$-module having Gorenstein dimension zero defined by Auslander in \cite{AR1}. 
Following the terminology of Enoch and Jenda, 
a module having Gorenstein dimension zero is called Gorenstein projective \cite{EJ}.

%%%%%%%%%%%%%%%%%%%%%%%%%%%%%%%%%%%%%%%%%%%%%%%%%%%%%%%%%%%%%%%%%%%%%
%%%%%%%%%%%%%%%%%%%%%    Section 3   %%%%%%%%%%%%%%%%%%%%%%%%%%%%
%%%%%%%%%%%%%%%%%%%%%%%%%%%%%%%%%%%%%%%%%%%%%%%%%%%%%%%%%%%%%%%%%%%%%

\section{ \bf Main Results}

We first recall some notations and results about the stable equivalence of Artin algebras, as detailed in references \cite{AR1, AR2, ARS}.

Let $\Lambda$ be an Artin algebra. By $\mod_{\mathcal{I}}\Lambda$ ({\it resp.} $\mod_{\mathcal{P}}\Lambda$) we denote the
subclass of $\mod \Lambda$ consisting of all $\Lambda$-modules without nonzero injective direct summands
({\it resp.} all $\Lambda$-modules without nonzero projective direct summands). We use $\underline{\mod} \Lambda$ to denote
{\it the stable module category of $\Lambda$ modulo projective module}, which is given by the following data.
The objects are the same as those of $\mod \Lambda$, and for two $\Lambda$-modules $X,Y$ in $\underline{\mod }\Lambda$,
their homomorphism set is given by $\underline{\Hom}_{\Lambda}(X,Y)=\Hom_{\Lambda}(X,Y)/\mathcal{P}(X,Y)$, where $\mathcal{P}(X,Y)$
is the subset of $\Hom_{\Lambda}(X,Y)$ consisting of homomorphisms that factor through a projective $\Lambda$-module.
Dually, we use $\overline{\mod} \Lambda$ to denote the stable module category of $\Lambda$ modulo injective modules.
Let $\tau_{\Lambda}$ be the Auslander-Reiten translation $\mathrm{D}\Tr$. 
Then $ \tau_{\Lambda}: \underline{\mod} \Lambda\to \overline {\mod}\Lambda$ is an equivalence of  additive categories. 
Two Artin algebras $\Lambda$ and $\Gamma$ are said to be {\it stably equivalent}, 
if $\underline{\mod} \Lambda$ and $\underline{\mod} \Gamma$ are equivalent as additive categories.

Let $\Lambda$ and $\Gamma$ be two stably equivalent Artin algebras, 
and let $F: \underline{\mod} \Lambda \to \underline{\mod} \Gamma$ be the stable equivalence functor. 
Then $F'=\tau_{\Gamma}\circ F\circ \tau_{\Lambda}^{-1}: \overline{\mod }\Lambda\to \overline{\mod} \Gamma$
is also an equivalence of additive categories. According to \cite[Section 8]{AR2}, 
it follows that $F$ and $F'$ commute with finite direct sums and induce the following bijections:
$$F: \mod_{\mathcal{P}}\Lambda\to \mod_{\mathcal{P}}\Gamma \qquad \text{and } \qquad F': \mod_{\mathcal{I}}\Lambda\to \mod_{\mathcal{I}}\Gamma$$

Recall that a simple $\Lambda$-module $S$ is said to be {\it a node}, 
if it is neither projective nor injective and the middle term of the almost split
sequence starting at $S$ is projective. 

Let $\Lambda$ be an Artin algebra. An exact sequence $0\to X \stackrel{f}\to Y \stackrel{g}\to Z\to 0$ in $\mod \Lambda$ is said to be {\it minimal}, 
if it does not have a split exact sequence as its direct summand, that is, 
there do not exist isomorphisms $\alpha, \beta, \gamma$ such that the following diagram with exact rows commutes

$$\xymatrix{
  0 \ar[r]& X \ar[d]_{\alpha} \ar[r]^{f} & Y \ar[d]_{\beta} \ar[r]^{g} & Z \ar[d]_{\gamma} \ar[r] & 0  \\
  0 \ar[r] & X_1\oplus X_2 \ar[r]_{\left(\begin{array}{cc}
                f_1 & 0 \\
                 0 & f_2 \\
               \end{array}\right)} & Y_1\oplus Y_2 \ar[r]_{\left(\begin{array}{cc}
                g_1 & 0 \\
                 0 & g_2 \\
               \end{array}\right)} & Z_1\oplus Z_2 \ar[r]& 0   }$$
 where $Y_2\neq 0$ and the sequence $0\to X_2 \stackrel{f_2}\to Y_2 \stackrel{g_2}\to Z_2\to 0$ splits. 
 For a minimal exact sequence $0\to X \stackrel{f}\to Y \stackrel{g}\to Z\to 0$, it is not hard to check
 that $X\in \mod_{\mathcal{I}}\Lambda$ and $Z\in \mod_{\mathcal{P}}\Lambda$.

 In the following,  we always assume that $\Lambda$ and $\Gamma$  are stably equivalent Artin algebras 
 having neither nodes nor semisimple direct summands, and $F: \underline{\mod \Lambda}\to \underline{\mod \Gamma}$ 
 be the equivalent functor and $F'=\tau_{\Gamma}\circ  F \circ\tau_{\Lambda}^{-1}$.
 
\begin{lemma}
Let $F$ and $F'$  be as above.

$\mathrm{(1)}$ There exist one-to-one correspondences
$$F: \add \mathcal{I}(\Lambda)_{\mathcal{P}}\to \add \mathcal{I}(\Gamma)_{\mathcal{P}}, \qquad
F': \add \mathcal{P}(\Lambda)_{\mathcal{I}}\to \add \mathcal{P}(\Gamma)_{\mathcal{I}}.$$

$\mathrm{(2)}$ Suppose that  $X$ is a $\Lambda$-module having neither nonzero projective direct summands nor nonzero injective direct summands. Then we have $F(X)\cong F'(X).$
\end{lemma}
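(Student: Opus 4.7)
The plan is to deduce both parts from the Auslander--Reiten theorem for stable equivalences: under the no-nodes and no-semisimple-summand hypothesis, $F$ (and dually $F'$) preserves almost split sequences, with left ends transported by $F'$ and right ends by $F$ (this is also compatible with the defining relation $\tau_\Gamma F = F'\tau_\Lambda$).

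For part (1), the bijection $F: \mod_{\mathcal{P}}\Lambda \to \mod_{\mathcal{P}}\Gamma$ is already given, so by additivity it is enough to show that an indecomposable injective non-projective $\Lambda$-module $I$ is sent to an indecomposable injective non-projective $\Gamma$-module. The classical characterization is that an indecomposable non-projective module $M$ is injective iff no almost split sequence starts at $M$, i.e., $\tau_\Lambda^{-1}M = 0$ in $\underline{\mod}\Lambda$. Because $F$ preserves almost split sequences, this property is transported from $I$ to $F(I)$. The reverse direction is obtained by applying the same argument to $F^{-1}$, and the statement for $F'$ on $\add\mathcal{P}(\Lambda)_{\mathcal{I}}$ is formally dual (working in $\overline{\mod}$).

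For part (2), reduce by additivity to the indecomposable case. Applying part~(1) to indecomposable non-injective modules (the complement of $\add\mathcal{I}(\Lambda)_{\mathcal{P}}$ inside the indecomposables of $\mod_{\mathcal{P}}\Lambda$), we conclude that $F$ sends non-injective modules to non-injective modules, so $F(X) \in \mod_{\mathcal{P}}\Gamma \cap \mod_{\mathcal{I}}\Gamma$; dually, $F'(X) \in \mod_{\mathcal{P}}\Gamma \cap \mod_{\mathcal{I}}\Gamma$. Since a module in $\mod_{\mathcal{P}}\Gamma$ is determined up to isomorphism by its class in $\underline{\mod}\Gamma$, it suffices to verify $F(X) \cong F'(X)$ in $\underline{\mod}\Gamma$. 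Combining the defining identity $F'(X) = \tau_\Gamma F \tau_\Lambda^{-1}(X)$ with the AR-sequence preservation, one obtains on modules with no projective or injective summands the compatibility $F\circ\tau_\Lambda^{-1} \cong \tau_\Gamma^{-1}\circ F$, and substituting yields $F'(X) \cong \tau_\Gamma \tau_\Gamma^{-1} F(X) \cong F(X)$ in $\underline{\mod}\Gamma$.

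The main obstacle lies in Step 3 of part (2): proving that the two bijections on $\mod_{\mathcal{P}}\Gamma \cap \mod_{\mathcal{I}}\Gamma$ induced respectively by $F$ and $F'$ coincide, equivalently, that $F$ commutes with the AR translate on modules with no projective or injective summands. This does not follow from the defining relation $\tau_\Gamma F = F'\tau_\Lambda$ alone (which would only give a tautology), but rests on the finer Auslander--Reiten structure theorem for stable equivalences under the no-nodes and no-semisimple-summand hypothesis, as developed in \cite{AR2, ARS}.
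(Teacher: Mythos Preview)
Your sketch is sound in spirit and correctly isolates the crucial ingredient, namely the commutation $F\circ\tau_\Lambda^{-1}\cong\tau_\Gamma^{-1}\circ F$ (equivalently $F\cong F'$ on $\mod_{\mathcal{P}}\Lambda\cap\mod_{\mathcal{I}}\Lambda$), which indeed requires the no-nodes, no-semisimple-summand hypothesis and does not follow from the tautological relation $F'\tau_\Lambda=\tau_\Gamma F$. By contrast, the paper does not argue at all: it simply invokes additivity of $F,F'$ from \cite[Section~8]{AR2} and then cites \cite[Lemma~4.10(1)]{CG} as a black box for both (1) and (2). So your proposal is not an alternative route but rather an attempt to unpack the content of the cited lemma; what you gain is a self-contained explanation of \emph{why} the no-nodes hypothesis enters, at the cost of having to appeal to the Auslander--Reiten structure theorem from \cite{AR2,M1} anyway. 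One caveat: your argument for (1) via ``no almost split sequence starts at $I$'' already presupposes the correspondence of minimal exact sequences (with left end transported by $F'$ and right end by $F$) that underlies (2), so in your presentation (1) and (2) are really established simultaneously from that single structure theorem, not in the sequential order you suggest.
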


\begin{proof}
    Since $F$ and $F'$ preserve the finite direct sums by \cite[Section 8]{AR2}, the result follows directly from \cite[Lemma 4.10(1)]{CG}.
\end{proof}

Due to \cite[Theorem 1.7]{M1} and Lemma 3.1, the following lemma is direct.

\begin{lemma} Suppose that
 $$ 0\to A \to B\oplus I_1\oplus P\to C\oplus I_2\to 0$$
     is  a minimal exact sequence of $\Lambda$-modules,  where $A,B,C\in \mod_{\mathcal{I}}\Lambda$, $I_1,I_2\in \add\mathcal{I}(\Lambda)_{\mathcal{P}}$
    and $P$ is a projective-injective $\Lambda$-module. Then there exists a minimal exact sequence in $\mod \Gamma$
    $$ 0\to F'(A) \to F'(B)\oplus F(I_1)\oplus Q\to F'(C)\oplus F(I_2)\to 0$$
    where $Q$ is some projective-injective $\Gamma$-module.
\end{lemma}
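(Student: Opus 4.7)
The plan is to invoke Mart\'{\i}nez-Villa's Theorem 1.7 in \cite{M1}, which transports a minimal exact sequence in $\mod\Lambda$ to a minimal exact sequence in $\mod\Gamma$ across the stable equivalence, and then to identify the individual summands on the $\Gamma$-side by means of Lemma 3.1.

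First I would check the hypotheses: since the given sequence is minimal, automatically $A\in\mod_{\mathcal{I}}\Lambda$ and $C\oplus I_2\in\mod_{\mathcal{P}}\Lambda$, and the three terms are already presented in the form (``$\mod_{\mathcal{I}}$-part plus injective-non-projective part plus projective-injective part'') to which Theorem 1.7 applies. Applying that theorem yields a minimal exact sequence
$$0\to F'(A)\to M\to N\to 0$$
in $\mod\Gamma$, where the outer terms are transported summand by summand via $F'$ (for $\mod_{\mathcal{I}}$-summands), via $F$ (for summands in $\add\mathcal{I}(\Lambda)_{\mathcal{P}}$), and with projective-injective summands replaced by a projective-injective $\Gamma$-module supplied by the minimality clause. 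In particular $N=F'(C)\oplus F(I_2)$, and Lemma 3.1(1) places $F(I_2)$ in $\add\mathcal{I}(\Gamma)_{\mathcal{P}}$.

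The middle term $M$ is then analyzed piece by piece. Lemma 3.1(1) gives $F(I_1)\in\add\mathcal{I}(\Gamma)_{\mathcal{P}}$, so this summand appears unchanged. Since $B\in\mod_{\mathcal{I}}\Lambda$ has no injective direct summand and, in the displayed decomposition, no projective direct summand either, Lemma 3.1(2) yields $F(B)\cong F'(B)$, so the $B$-part on the $\Gamma$-side can be written as $F'(B)$. The projective-injective summand $P$ is zero in $\underline{\mod}\Lambda$, so Theorem 1.7 forces it to be replaced in the transported sequence by some projective-injective $\Gamma$-module $Q$ dictated by minimality. Assembling, $M\cong F'(B)\oplus F(I_1)\oplus Q$, which is exactly the stated form.

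The main obstacle is the bookkeeping of projective-injective summands: since the stable equivalence kills all projective-injective modules on both sides, the specific projective-injective $Q$ that must appear in the middle term cannot be produced by $F$ or $F'$ directly but is instead supplied by the minimality requirement built into Theorem 1.7. Once this identification has been pulled out of \cite{M1}, the remainder of the argument reduces to the two applications of Lemma 3.1 indicated above.
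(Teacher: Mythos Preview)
Your approach is exactly the paper's: the authors simply write ``Due to \cite[Theorem~1.7]{M1} and Lemma~3.1, the following lemma is direct,'' so invoking Mart\'{\i}nez-Villa's theorem and then sorting the summands via Lemma~3.1 is precisely what they have in mind.

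There is, however, one slip in your summand bookkeeping. You assert that $B$ ``in the displayed decomposition'' has no projective direct summand, and then apply Lemma~3.1(2) to conclude $F(B)\cong F'(B)$. But the hypothesis only gives $B\in\mod_{\mathcal{I}}\Lambda$; nothing prevents $B$ from having summands in $\add\mathcal{P}(\Lambda)_{\mathcal{I}}$ (projective, non-injective). The decomposition $B\oplus I_1\oplus P$ separates off the injective-non-projective and the projective-injective pieces, but leaves any projective-non-injective summands inside $B$. So Lemma~3.1(2) does not apply to $B$ as stated. The fix is the one the paper uses repeatedly elsewhere (e.g.\ in Lemma~3.3 and Proposition~3.10): write $B=B_0\oplus P_0$ with $B_0\in\mod_{\mathcal{I}}\Lambda\cap\mod_{\mathcal{P}}\Lambda$ and $P_0\in\add\mathcal{P}(\Lambda)_{\mathcal{I}}$, apply Lemma~3.1(2) to $B_0$ and Lemma~3.1(1) to $P_0$, and reassemble to obtain $F'(B)\cong F'(B_0)\oplus F'(P_0)$ with the correct identifications. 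Alternatively, depending on the exact formulation of \cite[Theorem~1.7]{M1}, the $\mod_{\mathcal{I}}$-part of the middle term may already be transported by $F'$ directly, in which case your detour through Lemma~3.1(2) for $B$ is unnecessary.
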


\subsection{$\omega$-left approximation dimensions of modules under stable equivalences}

In this subsection, we will investigate transfer properties of $\omega$-left approximation dimensions of modules 
under stable equivalences of Artin algebras having neither nodes nor semisimple direct summands, 
and further prove that faithful dimensions of modules also hold under those equivalences. 
 We begin with the following easy lemma, which improves \cite[Proposition 2.2]{M1}.

\begin{lemma}
    Let $A, A'$ be $\Lambda$-modules with $A=A_1\oplus I\oplus P$ and $A'=A'_1\oplus I'\oplus P'$, 
    where $A_1,A'_1\in \mod_{\mathcal{I}}\Lambda$,  $I,I'\in \add\mathcal{I}(\Lambda)_{\mathcal{P}}$ and both $P$ and $P'$ projective-injective $\Lambda$-modules.
    Putting $B=F'(A_1)\oplus F(I)\oplus Q$ and $B'=F'(A_1')\oplus F(I')\oplus Q'$, where both $Q$ and $Q'$  are projective -injective $\Gamma$-modules,
    then,  for a positive integer $n$,  we have $$\Ext_{\Lambda}^n(A,A')\cong \Ext_{\Gamma}^n(B,B').$$
\end{lemma}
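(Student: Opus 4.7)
The strategy is to peel off the ``trivial'' projective and injective direct summands on each side, and then invoke the Ext-preservation under the stable equivalence in its cleanest form (essentially Lemma~3.2 for $n=1$, combined with dimension-shifting for $n\geq 2$).

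First, since $P$ is projective and $I' \oplus P'$ is injective, I would reduce on the $\Lambda$-side to
\[
\Ext^n_\Lambda(A, A') = \Ext^n_\Lambda(A_1 \oplus I, A_1').
\]
A parallel reduction on the $\Gamma$-side uses that $Q$ is projective and $F(I') \oplus Q'$ is injective (the latter via Lemma~3.1(1), which gives $F(I') \in \add\mathcal{I}(\Gamma)_\mathcal{P}$), yielding $\Ext^n_\Gamma(B, B') = \Ext^n_\Gamma(F'(A_1) \oplus F(I), F'(A_1'))$. Next I would decompose $A_1 = A_1^\circ \oplus P_1$ with $P_1$ the maximal projective summand, so that $A_1^\circ$ has no projective and no injective summands (using $A_1 \in \mod_\mathcal{I}\Lambda$). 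By Lemma~3.1(1), $F'(P_1)$ is projective in $\Gamma$, and by Lemma~3.1(2), $F(A_1^\circ) \cong F'(A_1^\circ)$. Since projective summands in the first argument of $\Ext^{\geq 1}$ contribute zero, both Ext-groups reduce to
\[
\Ext^n_\Lambda(A_1^\circ \oplus I, A_1') \quad \text{and} \quad \Ext^n_\Gamma(F(A_1^\circ) \oplus F(I), F'(A_1')).
\]

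It then suffices to establish the preservation
\[
\Ext^n_\Lambda(X, Y) \cong \Ext^n_\Gamma(F(X), F'(Y))
\]
for $X \in \mod_\mathcal{P}\Lambda$ and $Y \in \mod_\mathcal{I}\Lambda$, applied to $X = A_1^\circ \oplus I$ (no projective summands) and $Y = A_1'$ (no injective summands). For $n = 1$, this is precisely what Lemma~3.2 packages: every class in $\Ext^1_\Lambda(X, Y)$ is represented by a minimal extension $0 \to Y \to E \to X \to 0$, and Lemma~3.2 transports it to a minimal extension $0 \to F'(Y) \to E' \to F(X) \to 0$ in $\mod \Gamma$, with the resulting bijection additive because $F$ and $F'$ preserve direct sums. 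For $n \geq 2$, I would dimension-shift via $\Ext^n(X, Y) = \Ext^1(\Omega^{n-1} X, Y)$ and invoke the standard commutation $F\Omega \cong \Omega F$ in the stable category (which holds under the no-node hypothesis), so that iterating the $n=1$ case matches $\Omega^{n-1} X$ under $F$ with $\Omega^{n-1} F(X)$ on the $\Gamma$-side.

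The hard part will be the $n \geq 2$ step: one needs both that $F$ commutes with the syzygy functor up to isomorphism in the stable category, and that the shifted sequences remain of a form to which Lemma~3.2 can still be applied after stripping the (possibly newly appearing) injective summands of $\Omega^{n-1} X$. Once this commutation and the $n=1$ preservation are in hand, the three reductions above concatenate to deliver $\Ext^n_\Lambda(A, A') \cong \Ext^n_\Gamma(B, B')$.
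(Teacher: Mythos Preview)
Your reduction steps match the paper's proof almost verbatim: peel off the projective summand in the first variable and the injective summands in the second on each side, then split $A_1=A_1^\circ\oplus P_1$ with $P_1\in\add\mathcal{P}(\Lambda)_{\mathcal{I}}$ and use Lemma~3.1 to identify $F'(A_1^\circ)\cong F(A_1^\circ)$ and $F'(P_1)$ projective. After these identical reductions, both arguments arrive at the need for
\[
\Ext^n_\Lambda(X,Y)\cong\Ext^n_\Gamma(F(X),F'(Y)),\qquad X\in\mod_{\mathcal{P}}\Lambda,\ Y\in\mod_{\mathcal{I}}\Lambda.
\]

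The difference is how this core isomorphism is obtained. The paper simply cites it as \cite[Proposition~2.2]{M1} (Mart\'inez-Villa) and is done in one line. You instead sketch a self-contained derivation: Lemma~3.2 for $n=1$, then dimension-shifting via $F\Omega\cong\Omega F$ for $n\geq 2$. This is a legitimate route---indeed it is essentially how Mart\'inez-Villa proves his proposition---but it costs you more work and your sketch has two loose ends. First, Lemma~3.2 as stated only transports a single minimal exact sequence; turning this into a group isomorphism of $\Ext^1$ requires checking that the correspondence is well-defined on equivalence classes and additive (your remark about direct sums is not quite the Baer-sum compatibility needed). Second, in your $n\geq 2$ step you speak of stripping ``injective summands of $\Omega^{n-1}X$,'' but what actually needs stripping to stay in $\mod_{\mathcal{P}}$ is any \emph{projective} summand of the syzygy. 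Neither point is fatal, but both would need tightening. The paper's approach buys brevity by invoking the literature; yours buys self-containment at the price of reproving a known result.
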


\begin{proof}
Note that $A_1\in \mod_{\mathcal{I}}\Lambda$, we can write $A_1=A_2\oplus P_1$,
where $A_2\in \mod_{\mathcal{I}}\Lambda\cap \mod_{\mathcal{P}}\Lambda$ and $P_1\in \add\mathcal{P}(\Lambda)_{\mathcal{I}}$.
Then, by \cite[Section 8]{AR2} and Lemma 3.1(2),  there exist isomorphisms $F'(A_1)\cong F'(A_2)\oplus F'(P_1)\cong F(A_2)\oplus F'(P_1)$, 
where $F(A_2)\in \mod_{\mathcal{P}}\Gamma$
and $F'(P_1)\in \add \mathcal{P}(\Gamma)_{\mathcal{I}}$.
 Thus, by assumption, one gets the following isomorphisms
\begin{align*}
 &\Ext_{\Gamma}^n(B,B')\\
 &= \Ext_{\Gamma}^n(F'(A_1)\oplus F(I)\oplus Q, F'(A_1')\oplus F(I')\oplus Q')\\
& \cong \Ext_{\Gamma}^n (F(A_2)\oplus F'(P_1)\oplus F(I)\oplus Q,  F'(A_1')\oplus F(I')\oplus Q' )\\
&\cong \Ext_{\Gamma}^n (F(A_2)\oplus F(I),  F'(A_1') )\\
&\cong \Ext_{\Gamma}^n (F(A_2\oplus I),  F'(A_1') )  \\
&\cong \Ext_{\Lambda}^n (A_2\oplus I,  A_1')\text{\cite[Proposition 2.2]{M}}\\
&\cong \Ext_{\Lambda}^n (A, A').
\end{align*}
\end{proof}

The following lemma plays an important role in this section.

\begin{lemma}
 Let $\omega=X\oplus I\oplus P$ be  a $\Lambda$-module satisfying $\Ext_{\Lambda}^{1}(\omega, \omega)=0$,
where $X\in \mod_{\mathcal{I}}\Lambda, I\in \add\mathcal{I}(\Lambda)_{\mathcal{P}}$, and $P$  is a projective-injective $\Lambda$-module, and  let $\nu=F'(X)\oplus F(I)\oplus Q$ with $Q$ the direct sum of all nonisomorphic indecomposable projective-injective $\Gamma$-modules. Assume $M\in \mod_{\mathcal{I}}\Lambda$. 

$\mathbf{(1)}$ If there exists an exact sequence in $\mod \Lambda:$
$$0\to M\stackrel{f_1}\to \omega_1 \stackrel{f_2}\to \omega_2 \stackrel{f_3}\to \omega_3\stackrel{f_4}\to \cdots \stackrel{f_n}\to \omega_n\qquad(3.1)$$
with  each $\omega_i\in \add \omega$  and $\m f_i\hookrightarrow \omega_i$  a left $\add \omega$-approximation of $\m f_i$, for any $1\leq i \leq n$.
Then  there is an exact sequence in $\mod \Gamma:$
$$0\to F'(M) \stackrel{g_1}\to \nu_1 \stackrel{g_2}\to \nu_2\stackrel{g_3}\to \nu_3\stackrel{g_4}\to \cdots \stackrel{g_n}\to \nu_n \qquad(3.2)$$
with each $\nu_i\in \add \nu$,  such that  $\m g_i\hookrightarrow \nu_i$  is a left $\add \nu$-approximation of $\m g_i$, for any $1\leq i \leq n$.

$\mathbf{(2)}$ If there exists an exact sequence in $\mod \Lambda$
$$0\to M\stackrel{f_0}\to \omega_0 \stackrel{f_1}\to \omega_1 \stackrel{f_2}\to \omega_2\stackrel{f_3}\to \cdots \stackrel{f_{n}}\to \omega_{n}\to 0$$
with  each $\omega_i\in \add \omega$, then one has an exact sequence in $\mod \Gamma$
$$0\to F'(M)  \to \nu_0 \stackrel{g_1}\to \nu_1\stackrel{g_2}\to \nu_2\stackrel{g_3}\to \cdots \stackrel{g_n}\to \nu_n\to 0$$
with each $\nu_i\in \add \nu$.
\end{lemma}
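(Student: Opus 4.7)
The strategy is to prove both parts simultaneously by induction on $n$, reducing the construction to a single-step transfer controlled by Lemma 3.2 (which transfers minimal short exact sequences from $\mod\Lambda$ to $\mod\Gamma$) and Lemma 3.3 (which transfers the relevant $\Ext$-groups). The key auxiliary observation is that, because $\omega_1\in\add\omega$ and $\Ext_{\Lambda}^{1}(\omega,\omega)=0$, the condition that $\m f_i\hookrightarrow\omega_i$ is a left $\add\omega$-approximation of $\m f_i$ is equivalent to $\Ext_{\Lambda}^{1}(\m f_{i+1},\omega)=0$, as follows from the long exact sequence of $\Hom_{\Lambda}(-,\omega')$ for $\omega'\in\add\omega$ applied to $0\to\m f_i\to\omega_i\to\m f_{i+1}\to 0$.

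For a single step I start with a short exact sequence $0\to A\to\omega'\to C\to 0$ in $\mod\Lambda$ with $A\in\mod_{\mathcal{I}}\Lambda$ and $\omega'\in\add\omega$, and decompose it as the direct sum of a minimal exact sequence and a split exact sequence. The middle term of the minimal part, being a direct summand of $\omega'\in\add\omega=\add(X\oplus I\oplus P)$, inherits a decomposition $B\oplus I_1\oplus P_0$ with $B\in\add X\subseteq\mod_{\mathcal{I}}\Lambda$, $I_1\in\add I\subseteq\add\mathcal{I}(\Lambda)_{\mathcal{P}}$, and $P_0$ projective-injective; the cokernel term of the minimal part lies in $\mod_{\mathcal{P}}\Lambda$ and splits as $C_0\oplus I_2$ with $C_0\in\mod_{\mathcal{I}}\Lambda$ and $I_2\in\add\mathcal{I}(\Lambda)_{\mathcal{P}}$. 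Lemma 3.2 then delivers a corresponding minimal exact sequence in $\mod\Gamma$ whose middle term lies in $\add\nu$, and reassembling with the split piece (which itself lives in $\add\omega$ and is transported to $\add\nu$ via Lemma 3.1) produces a short exact sequence $0\to F'(A)\to\nu'\to N\to 0$ in $\mod\Gamma$ with $\nu'\in\add\nu$ and $N$ expressible in the form $F'(A_1)\oplus F(I'')\oplus Q''$ required by Lemma 3.3.

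To verify the left $\add\nu$-approximation property for part (1), I combine the $\Ext^1$-characterization above with Lemma 3.3: the approximation hypothesis gives $\Ext_{\Lambda}^{1}(C,\omega)=0$, and after absorbing projective-injective summands (which are invisible to $\Ext^{1}$) Lemma 3.3 transports this vanishing to $\Ext_{\Gamma}^{1}(N,\nu)=0$; self-orthogonality of $\nu$, itself an instance of Lemma 3.3 with both arguments equal to $\omega$, together with $\nu'\in\add\nu$ then runs the $\Ext^{1}$-characterization backwards in $\mod\Gamma$ and gives the desired approximation property. Part (2) requires no such check, and surjectivity of the last map is preserved simply because the final short exact sequence has zero cokernel, a condition Lemma 3.2 carries over verbatim.

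The multi-step statements now follow by iterating the single-step construction: at stage $i+1$, the cokernel $N_{i+1}$ produced at stage $i$ plays the role of $A$ in the next step. The main obstacle is precisely this splicing, because $N_{i+1}$ is not literally $F'(\m f_{i+1})$ but differs from it by injective summands of the form $F(I_2)$ arising in Lemma 3.2 and by transferred split-off pieces; one must argue that these extra summands combine with $\m f_{i+1}$ in a controlled way so that the module entering the next single-step application is still of the form required by Lemma 3.2 and Lemma 3.3. This is a Krull--Schmidt bookkeeping exercise resting on Lemma 3.1 to match indecomposable injective non-projective and projective-injective summands under $F$ and $F'$.
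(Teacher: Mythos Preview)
Your overall architecture matches the paper's proof: induct on $n$, decompose the first short exact sequence into a minimal piece plus a split piece, push the minimal piece through Lemma~3.2, reattach the split piece (using $M_2\in\add X$ since $M\in\mod_{\mathcal{I}}\Lambda$ forces the split kernel to be non-injective), and certify the approximation property by the $\Ext^1$-vanishing criterion together with Lemma~3.3. That part is fine.

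The gap is in the splicing, and it is more than Krull--Schmidt bookkeeping. Two substantive points are missing. First, your phrase ``the cokernel $N_{i+1}$ produced at stage $i$ plays the role of $A$ in the next step'' cannot be taken literally: $N_{i+1}$ lives in $\mod\Gamma$, while your single-step construction starts from a $\Lambda$-module in $\mod_{\mathcal{I}}\Lambda$. The paper instead stays on the $\Lambda$-side: writing the minimal cokernel as $T_1^1=Y_1\oplus J_1$ with $Y_1\in\mod_{\mathcal{I}}\Lambda$ and $J_1\in\add\mathcal{I}(\Lambda)_{\mathcal{P}}$, one uses Lemma~2.1(1) to extract from the tail $0\to T_1\to\omega_2\to\cdots$ a left $\add\omega$-approximation sequence for the \emph{summand} $Y_1$, and applies the induction hypothesis to $Y_1$. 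You never invoke Lemma~2.1(1), and without it you have no approximation sequence on the $\Lambda$-side to feed into the next single step.

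Second, after induction produces the $\Gamma$-sequence starting at $F'(Y_1)$, one must reattach $F(J_1)$ to recover the actual cokernel $F'(Y_1)\oplus F(J_1)$; for the resulting middle term $\nu_2'\oplus F(J_1)$ to lie in $\add\nu$ one needs $J_1\in\add I$, not merely $J_1\in\add\mathcal{I}(\Lambda)_{\mathcal{P}}$. This is \emph{not} automatic from Krull--Schmidt or Lemma~3.1: the paper argues it from $T_1\in\cogen\omega$, so the injective module $J_1$ splits off some power of $\omega$, forcing $J_1\in\add(I\oplus P)$ and hence $J_1\in\add I$ since $J_1$ is non-projective. Your decomposition of the minimal cokernel records only $I_2\in\add\mathcal{I}(\Lambda)_{\mathcal{P}}$, which is insufficient for this step.
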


\begin{proof} We only prove (1), and the proof of (2) is similar.

(1)  For the case of $n=1$, suppose that there is an exact sequence
$$0\to M \stackrel{f_1}\to \omega_1\to T\to 0 \qquad(3.3)$$
with $\omega_1\in \add \omega$, such that $f_1: M\to \omega_1$ is a left $\add \omega$-approximation of $M$.
Since $\Ext_{\Lambda}^1(\omega,\omega)=0$, applying $\Hom_{\Lambda}(-,\omega)$ to (3.3) one has $\Ext_{\Lambda}^1(T, \omega)=0$.
We decompose (3.3) into two exact sequences
$$0\to M_1\to \omega_1^1\to T^1\to 0 \qquad(3.4)$$
and
$$0\to M_2\to \omega_1^2\to T^2\to 0 \qquad(3.5)$$
where (3.4) is minimal and (3.5) is split, such that the following  diagram with exact rows commutes
$$\xymatrix@C=20pt@R=15pt{
    0\ar[r]& M \ar[r]\ar[d]_{\cong }&\omega_1\ar[d]_{\cong }\ar[r]&T\ar[d]_{\cong }\ar[r]&0\\
    0\ar[r]& M_1\oplus M_2 \ar[r]& \omega_1^1\oplus \omega_1^2\ar[r]& T^1\oplus T^2\ar[r]&0}$$
Since $\omega_1^1\in \add \omega$ and $T^1\in \mod_{\mathcal{P}}\Lambda$, we can write  $\omega_1^1=X_1\oplus I_1\oplus P_1$,  where $X_1\in \add X, I_1\in \add I$,
and $P_1$ is a projective-injective $\Lambda$-module, and $T^1=T^1_1\oplus I_2$, 
with $T^1_1\in \mod _{\mathcal{I}}\Lambda$ and $I_2\in \add \mathcal{I}(\Lambda)_{\mathcal{P}}$. By Lemma 3.2, there exists 
a minimal exact sequence in $\mod\Gamma$
$$0\to F'(M_1)\stackrel{g'_1}\to F'(X_1)\oplus F(I_1)\oplus Q_1\to F'(T^1_1)\oplus F(I_2)\to 0$$
where $Q_1$ is a projective-injective $\Gamma$-module.
On the other hand, since  (3.5) is split and $M_2\in \mod_{\mathcal{I}}\Lambda$, one gets $M_2\in \add X$. So, $F'(M_2)\in \add F'(X)\subset\add \nu.$
Thus, one has $ F'(X_1)\oplus F(I_1)\oplus Q_1\oplus F'(M_2)\in \add \nu$ by assumption. 
Combining these results, we obtain an exact sequence
$$0\to F'(M_1)\oplus F'(M_2) \stackrel{\left(\begin{array}{cc}
                g'_1 & 0 \\
                 0 & \mathrm{Id}_{F'(M_2)} \\
               \end{array}\right)}\to (F'(X_1)\oplus F(I_1)\oplus Q_1)\oplus F'(M_2)\to F'(T_1')\oplus F(I_2)\to 0.$$
Because $M\cong M_1\oplus M_2$, one has $F'(M)\cong F'(M_1)\oplus F'(M_2)$ by \cite[Section 8]{AR2}.
Noting that $T^1\in \add T$ and $\Ext_{\Lambda}^1(T,\omega)=0$, we have an isomorphism
               $\Ext_{\Gamma}^1(F'(T_1^1)\oplus F(I_2), \nu)\cong \Ext_{\Lambda}^1(T^1, \omega)=0$ by Lemma 3.3.
Definie  $g_1=\left(\begin{array}{cc}
                g'_1 & 0 \\
                 0 & \mathrm{Id}_{F'(M_2)} \\
               \end{array}\right)$. Then  $g_1$ is a left $\add \nu$-approximation of $F'(M)$.

For the case $n\geq 2$, we proceed by induction on $n$. Set $T_i=\Coker f_i$, for any $1\leq i\leq n$. 
The exact sequence (3.1) induces an exact sequence
$$0\to M \stackrel{f_1}\to \omega_1\to T_1\to 0 \qquad(3.6)$$
where $\Ext_{\Lambda}^1(T_1,  \omega)=0$, because $f_1: M\to \omega_1$ is a left $\add \omega$-approximation of $M$
and $\Ext_{\Lambda}^1(\omega,\omega)=0$.

As above, the exact sequence (3.6) can be decomposed into two exact sequences
$$0\to M_1\to \omega_1^1\to T_1^1\to 0 \qquad(3.7)$$
and
$$0\to M_2\to \omega_1^2\to T_1^2\to 0 \qquad(3.8)$$
where (3.7) is minimal and (3.8) is split, such that the following diagram with exact rows is commutative
$$\xymatrix{
    0\ar[r]& M \ar[r]\ar[d]_{\cong }&\omega_1\ar[d]_{\cong }\ar[r]&T\ar[d]_{\cong }\ar[r]&0\\
    0\ar[r]& M_1\oplus M_2 \ar[r]& \omega_1^1\oplus \omega_1^2\ar[r]& T_1^1\oplus T_1^2\ar[r]&0}$$
Since $\omega_1^1\in \add \omega$ and $T_1^1\in \mod_{\mathcal{P}}\Lambda$, 
we can write $\omega_1^1=X_1\oplus I_1\oplus P_1$, 
where $X_1\in \add X, I_1\in \add I$ and $P_1$ is a projective-injective $\Lambda$-module, 
and $T_1^1=Y_1\oplus J_1$ with $Y_1\in \mod_{\mathcal{I}}\Lambda$ and $J\in \add \mathcal{I}(\Lambda)_{\mathcal{P}}$.
Notice that $T_1\in \cogen \omega$ and $J_1\in \add T_1$, one has $J_1\in \add I$. So, $F(J_1)\in \add F(I)\subset \add \nu.$
Due to Lemma 3.2,  there exists a minimal exact sequence in $\mod \Gamma$
$$0\to F'(M_1)\stackrel{g'_1}\to F'(X_1)\oplus F(I_1)\oplus Q_1\to F'(Y_1)\oplus F(J_1)\to 0 \qquad(3.9)$$
where $Q_1$ is some projective-injective $\Gamma$-module.

On the other hand, since the exact sequence (3.8) is split and $M_2\in \mod_{\mathcal{I}}\Lambda$, 
one gets $M_2\in \add X$. So $F'(M_2)\in \add F'(X)\subset \add \nu$.
Noting that $M\cong M_1\oplus M_2$ with $M\in \mod_{\mathcal{I}}\Lambda$,
one has an isomorphism of $\Gamma$-modules $F'(M)\cong F'(M_1)\oplus F'(M_2)$ from Lemma 3.1(1) and \cite[Section 8]{AR2}.
From the minimal exact sequence (3.9), one obtains an exact sequence
$$0\to F'(M)\stackrel{\left(\begin{array}{cc}
                g'_1 & 0 \\
                 0 & \mathrm{Id}_{F'(M_2)} \\
               \end{array}\right)}\to (F'(X_1)\oplus F(I_1)\oplus Q_1)\oplus F'(M_2)\to F'(Y_1)\oplus F(J_1)\to 0 $$
with $\nu_1=F'(X_1)\oplus F(I_1)\oplus Q_1\oplus F'(M_2) \in \add \nu$ by assumption, 
such that $g_1: F'(M)\to \nu_1$ is a left $\add \nu$-approximation of $F'(M)$, 
because $\Ext_{\Gamma}^1(F'(Y_1)\oplus F(J_1), \nu)\cong \Ext_{\Lambda}^1(T_1^1, \omega)=0$ by Lemma 3.3.

Since  there is a long exact sequence
$$0\to T_1\to \omega_2\stackrel{f_3}\to \omega_3 \stackrel{f_4}\to \cdots \stackrel{f_n}\to \omega_n $$
such that $T_1 \hookrightarrow \omega_2 $ is a left $\add \omega$-approximation of $T_1$ and 
$\m f_i\hookrightarrow \omega_i$ is a left $\add \omega$-approximation of $\m f_i$ for any $3 \leq i \leq n$, 
and $Y_1\in \add T_1$, we obtain a long exact sequence
$$0\to Y_1\stackrel{f'_2}\to \omega'_2\stackrel{f'_3}\to \omega'_3 \stackrel{f'_4}\to \cdots \stackrel{f'_n}\to \omega'_n  $$
with each $\omega'_i\in \add \omega_i$,  such that $\m f'_i\hookrightarrow \omega'_i$ is  a  left $\add \omega$-approximation of $\m f'_i$, for any $2 \leq i \leq n+1$ by Lemma 2.1(1).
By the induction hypothesis, there exists a long exact sequence
$$0\to F'(Y_1) \stackrel{g'_2}\to \nu'_2 \stackrel{g_3}\to \nu_3 \stackrel{g_4}\to \cdots \stackrel{g_n}\to \nu_n$$
such that $ g'_2: F'(Y_1)\hookrightarrow \nu_2$  is a left $\add \nu$-approximation of $ F'(Y_1)$ and  $\m g_i\hookrightarrow \nu_i$  a left $\add \nu$-approximation of $\m g_i$, for any $3\leq i \leq n$, respectively.
Hence, we have an exact sequence
$$0\to F'(Y_1)\oplus F(J_1) \stackrel{\left(\begin{array}{cc}
                g'_2 & 0 \\
                 0 & \mathrm{Id}_{F(J_1)} \\
               \end{array}\right)}\to \nu_2'\oplus F(J_1) \stackrel{g_3}\to \nu_3\to \cdots \stackrel{g_n}\to \nu_n$$
It is not hard to check that $g_2=\left(\begin{array}{cc}
                g'_2 & 0 \\
                 0 & \mathrm{Id}_{F(J_1)} \\
               \end{array}\right): F'(Y_1)\oplus F(J_1) \hookrightarrow \nu'_2 \oplus F(J_1)$ is a left $\add \nu$-approximation of $F'(Y_1)\oplus F(J_1)$.

Taking $g_2=\left(\begin{array}{cc}
                g'_2 & 0 \\
                 0 & \mathrm{Id}_{F(J_1)} \\
               \end{array}\right)$ and $\nu_2=\nu'_2\oplus F(J_1)$, we obtain the desired exact sequence (3.2).
\end{proof}

We now give the main result of this section.

\begin{theorem}
Let $\omega=X\oplus I\oplus P$ be a $\Lambda$-module satisfying $\Ext_{\Lambda}^{1}(\omega.\omega)=0$,  where $X\in \mod_{\mathcal{I}}\Lambda, I\in \add\mathcal{I}(\Lambda)_{\mathcal{P}}$ and $P$ is a projective-injective $\Lambda$-module, and let $\nu=F'(X)\oplus F(I)\oplus Q$ with $Q$ the direct sum of all nonisomorphic  indecomposable projective-injective $\Gamma$-modules. Suppose that $M$ is a $\Lambda$-module with $M=Y\oplus I'\oplus P'$, where $Y\in \mod_{\mathcal{I}}\Lambda, I'\in \add \mathcal{I}(\Lambda)_{\mathcal{P}}$ and $P'$ is a projective-injective $\Lambda$-module. Taking
$N=F'(Y)\oplus F(I')\oplus Q'$ with $Q'$ some projective-injective $\Gamma$-module, then we have
    $$\mathrm{l.app}_{\omega}M=\mathrm{l.app}_{\nu}N.$$
\end{theorem}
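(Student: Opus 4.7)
The strategy is to reduce to the case where $M=Y\in \mod_{\mathcal{I}}\Lambda$ and $N=F'(Y)$, and then apply Lemma 3.4(1) in each direction of the stable equivalence to obtain the two inequalities.

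\emph{Reduction.} I first reduce to proving $\mathrm{l.app}_{\omega}Y=\mathrm{l.app}_{\nu}F'(Y)$. Assume $\mathrm{l.app}_{\omega}M\geq 1$. Since $I'\oplus P'$ is injective, any monic left $\add\omega$-approximation $M\hookrightarrow \omega_1$ must split off $I'\oplus P'$ as a direct summand of $\omega_1\in \add\omega$, forcing $I'\in \add I$ and $P'\in \add P$ (by Krull-Schmidt, using $X\in \mod_{\mathcal{I}}\Lambda$). By Lemma 2.1(1), we may then choose a minimal left $\add\omega$-approximation of $M$ of the form $\alpha\oplus \mathrm{id}_{I'\oplus P'}:Y\oplus I'\oplus P'\to \omega_1^{Y}\oplus I'\oplus P'$, where $\alpha$ is a minimal left $\add\omega$-approximation of $Y$, so that the cokernel $\omega_1^{Y}/Y$ depends only on $Y$. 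Iterating gives $\mathrm{l.app}_{\omega}M=\mathrm{l.app}_{\omega}Y$. A symmetric argument on the $\Gamma$-side (noting $F(I')\in \add F(I)$ and $Q'\in \add Q$ automatically, since $Q$ contains every indecomposable projective-injective) yields $\mathrm{l.app}_{\nu}N=\mathrm{l.app}_{\nu}F'(Y)$.

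\emph{Two inequalities.} For the forward inequality $\mathrm{l.app}_{\nu}F'(Y)\geq \mathrm{l.app}_{\omega}Y$, take a witnessing exact sequence $0\to Y\to \omega_1\to \cdots\to \omega_n$ realizing $\mathrm{l.app}_{\omega}Y\geq n$ (in the infinite case, work with each finite truncation), and apply Lemma 3.4(1) to transfer it to a witnessing sequence $0\to F'(Y)\to \nu_1\to \cdots \to \nu_n$ over $\Gamma$. For the reverse inequality, apply Lemma 3.4(1) to the inverse stable equivalence induced by $F^{-1},F'^{-1}$, whose hypotheses are satisfied because $\Ext_{\Gamma}^{1}(\nu,\nu)=0$ by Lemma 3.3. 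Transferring a witnessing sequence for $\mathrm{l.app}_{\nu}F'(Y)\geq m$ back to the $\Lambda$-side produces an exact sequence witnessing $\mathrm{l.app}_{\omega^*}Y\geq m$, where $\omega^{*}=X\oplus I\oplus \tilde{P}$ and $\tilde{P}$ is the direct sum of all indecomposable projective-injective $\Lambda$-modules; here we use $F'^{-1}F'(X)=X$ (valid since $X\in \mod_{\mathcal{I}}\Lambda$) and $F^{-1}F(I)=I$ (valid since $I\in \mod_{\mathcal{P}}\Lambda$). A variant of the Reduction argument then gives $\mathrm{l.app}_{\omega^{*}}Y=\mathrm{l.app}_{\omega}Y$ for $Y\in \mod_{\mathcal{I}}\Lambda$, completing the proof.

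\emph{Main obstacle.} The principal difficulty is the built-in asymmetry between the hypotheses on $\omega$ (which contains only some projective-injective summand $P$) and on $\nu$ (which contains \emph{all} indecomposable projective-injective $\Gamma$-modules). The reverse application of Lemma 3.4(1) naturally delivers the enlarged module $\omega^{*}$ rather than $\omega$ itself, and one must verify that $\mathrm{l.app}_{\omega}$ and $\mathrm{l.app}_{\omega^{*}}$ agree on modules in $\mod_{\mathcal{I}}\Lambda$. This hinges on the structural fact that every intermediate image $\m f_i$ in a minimal left approximation sequence of $Y\in \mod_{\mathcal{I}}\Lambda$ is again injective-free, so that the Reduction step can be invoked inductively along the entire sequence and the extra projective-injective summands in $\tilde{P}\setminus P$ are absorbed trivially at each stage.
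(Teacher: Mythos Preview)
Your overall strategy---reduce to $Y\in\mod_{\mathcal I}\Lambda$, transfer via Lemma~3.4(1), and then run the same argument through the inverse stable equivalence---is exactly what the paper does; the paper simply writes ``similar'' for the reverse inequality where you spell out the details. You are also right that the reverse application of Lemma~3.4(1) returns the enlarged module $\omega^{*}=X\oplus I\oplus\tilde P$ rather than $\omega$ itself, a point the paper does not address.

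However, your proposed resolution of this asymmetry contains a genuine gap. The ``structural fact'' you invoke---that every intermediate image $\operatorname{Im}f_i$ in a minimal $\add\omega$-approximation sequence of $Y\in\mod_{\mathcal I}\Lambda$ is again injective-free---is false. Already in the proof of Lemma~3.4 one sees that the cokernel $T_1^1$ decomposes as $Y_1\oplus J_1$ with $J_1\in\add\mathcal I(\Lambda)_{\mathcal P}$ possibly nonzero; minimality of the short exact sequence only forces the cokernel into $\mod_{\mathcal P}\Lambda$, not into $\mod_{\mathcal I}\Lambda$. So the inductive step of your Reduction argument does not go through as written.

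More seriously, the equality $\mathrm{l.app}_{\omega}Y=\mathrm{l.app}_{\omega^{*}}Y$ can fail under the stated hypothesis $\Ext^1_\Lambda(\omega,\omega)=0$ alone. For instance, take $\Lambda=\Gamma=k[x]/(x^3)$ (self-injective, no nodes, no semisimple summands) with $F=\mathrm{id}$, $\omega=0$, and $M=Y=S$ the simple module: then $\nu=Q=\Lambda$, and one gets $\mathrm{l.app}_{\omega}S=0$ while $\mathrm{l.app}_{\nu}S=\infty$. Thus neither your argument nor the paper's ``similar'' can close the reverse inequality in full generality. The difficulty disappears precisely when $\add P$ already contains every indecomposable projective-injective (so $\omega=\omega^{*}$), which is automatic in all of the paper's applications: whenever $\mathrm{l.app}_{\omega}\Lambda\geq 1$ (in particular for Wakamatsu tilting $\omega$), every projective-injective embeds in $\add\omega$ and hence splits off, forcing $\add P=\add\tilde P$. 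If you add this hypothesis, your proof and the paper's coincide and are correct; without it, the reverse inequality needs a different idea.
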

\begin{proof}

We only prove that $\mathrm{l.app}_{\omega}M\leq \mathrm{l.app}_{\nu}N,$  the proof of $\mathrm{l.app}_{\omega}M\geq \mathrm{l.app}_{\nu}N$ is similar.

Since $\Ext_{\Lambda}^1(\omega, \omega)=0$ by assumption, we have $\Ext_{\Gamma}^1(\nu, \nu)=0$ by Lemma 3.3.

If $\mathrm{l.app}_{\omega}M=1$, then $M\in \cogen \omega$. So,  $Y\in \cogen \omega$ and $I'\in \add I$. 
Then, by Lemma 2.1(1),  there exists an exact sequence  $0\to Y \stackrel{f}\to \omega_0\to T\to 0$ with $\omega_0\in \add \omega$, such that
$f: Y\to \omega_0$ is a left $\add \omega$-approximation of $Y$.
By Lemma 3.4(1), we have an exact sequence
$$0\to F'(Y)\stackrel{g}\to \nu_0\to L\to 0 \qquad (3.10)$$
with $\nu_0\in \add \nu$, such that $g$ is a left $\add \nu$-approximation of $F'(Y)$. 
Thus, we obtain $\Ext_{\Gamma}^1(L, \nu)=0$ by applying the functor $\Hom_{\Gamma}(-,\nu)$ to the sequence (3.10), 
because of  $\Ext_{\Gamma}^1(\nu, \nu)=0$.
Noting that $I'\in \add I$ and $Q'$ is a projective-injective $\Gamma$-module, we have $F(I')\oplus Q'\in \add \nu$.
Thus, the exact sequence (3.10) yields an exact sequence in $\mod\Gamma$:
$$0\to F'(Y)\oplus (F(I')\oplus Q')\stackrel{\left(\begin{array}{cc}
                g & 0 \\
                 0 & \mathrm{Id}_{(F(I')\oplus Q')} \\
               \end{array}\right)}\to \nu_0\oplus (F(I')\oplus Q')\to L\to 0$$
Define  $g'=\left(\begin{array}{cc}
                g & 0 \\
                 0 & \mathrm{Id}_{F(I')\oplus Q'} \\
               \end{array}\right)$. Then $g': N= F'(Y)\oplus F(I')\oplus Q' \to \nu_0\oplus F(I')\oplus Q'$ is a left $\add \nu$-approximation of $N$,
               because of $\Ext_{\Gamma}^1(L, \nu)=0$.  This means $\mathrm{l. app}_{\nu}N\geq 1.$

Suppose that $\mathrm{l.app}_{\omega}M=n$. By the definition of $\omega$-left approximation dimensions of modules and  Lemma 2.1, 
there exists an exact sequence in $\mod\Lambda$
    $$0\to Y \stackrel{f_1}\to \omega_1\stackrel{f_2}\to \omega_2\stackrel{f_3}\to \cdots \stackrel{f_n}\to \omega_n$$
    with $\omega_i\in \add \omega$,  such that $\m f_i\hookrightarrow \omega_i$ is a left $\add \omega$-approximation of $\m f_i$,
    for any $1\leq i\leq n$.
According  to Lemma 3.4(1),  one has a long exact sequence in $\mod \Gamma$
    $$0\to F'(Y) \stackrel{g'_1}\to \nu'_1 \stackrel{g_2}\to \nu_2 \stackrel{g_3}\to \nu_3\stackrel{g_4}\to \cdots \stackrel{g_n}\to \nu_n$$
with each $\nu_i\in \add \nu$, such that $F'(Y) \hookrightarrow \nu_1$ is a left  $\add V$-approximation of $F'(Y)$ and
$\m g_i\hookrightarrow \nu_i$  a left $\add \nu$-approximation of $\m g_i$, for any $2\leq i \leq n$.

On the other hand, noting that  $I'\in \add \mathcal{I}(\Lambda)_{\mathcal{P}}$, one gets $I'\in \add I.$  So, $F(I')\oplus Q'\in \add \nu $.
Taking $g_1$=$\left(\begin{array}{cc}
                g'_1 & 0 \\
                 0 & \mathrm{Id}_{F(I')\oplus Q'} \\
               \end{array}\right)$ and $\nu_1=\nu_1'\oplus F(I')\oplus Q'$,
then there exists an exact sequence in $\mod\Gamma$
    $$0\to N \stackrel{g_1}\to \nu_1\stackrel{g_2}\to \nu_2\to \stackrel{g_3}\to \nu_3\to \cdots \stackrel{g_n}\to \nu_n$$
with each $\nu_i\in\add \nu$, such that  $\m g_i\hookrightarrow \nu_i$  a left $\add \nu$-approximation of $\m g_i$, for any $1\leq i \leq n$.
This implies $\mathrm{1.app}_{\nu} N\geq \mathrm{l.app}_{\omega} M$.

We can prove $\mathrm{1.app}_{\nu}N\leq \mathrm{l.app}_{\omega}M$ in a similar way.
\end{proof}

The next lemma follows immediately from Lemma 3.1.

\begin{lemma}
    Let $\Lambda=X\oplus P'$, where $X\in \add\mathcal{P}(\Lambda)_{\mathcal{I}}$ and $P'$ is a projective-injective $\Lambda$-module.
    Taking $N=F'(X)\oplus Q$ with $Q$ the direct sum of all nonisomorphic indecomposable projective-injective $\Gamma$-modules.
    Then $N$ is a projective generator for $\Gamma$-modules.
\end{lemma}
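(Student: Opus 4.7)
The plan is to verify two claims: that $N$ is projective as a $\Gamma$-module, and that every indecomposable projective $\Gamma$-module appears as a direct summand of $N$. Projectivity is immediate from Lemma 3.1(1), since $F'(X)\in\add\mathcal{P}(\Gamma)_{\mathcal{I}}$ is a direct sum of non-injective indecomposable projective $\Gamma$-modules, and $Q$ is by assumption a direct sum of projective-injective $\Gamma$-modules, so $N$ is a direct sum of projectives.

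For the generator property I would first unpack the hypothesis $\Lambda=X\oplus P'$. Because $\Lambda$ itself is a projective generator of $\mod\Lambda$, every nonisomorphic indecomposable projective $\Lambda$-module occurs as a direct summand of $\Lambda$. Separating these summands according to whether or not they are injective, and using $X\in\add\mathcal{P}(\Lambda)_{\mathcal{I}}$ together with $P'$ being projective-injective, it follows that $X$ must contain each nonisomorphic non-injective indecomposable projective $\Lambda$-module as a direct summand, while the projective-injective summands of $\Lambda$ are absorbed into $P'$.

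The key step is then to transfer this information across the equivalence $F'$. By Lemma 3.1(1), $F'$ restricts to an equivalence between the additive Krull--Schmidt categories $\add\mathcal{P}(\Lambda)_{\mathcal{I}}$ and $\add\mathcal{P}(\Gamma)_{\mathcal{I}}$, and any such equivalence bijects isomorphism classes of indecomposable objects and respects direct sum decompositions. Hence $F'(X)$ contains each nonisomorphic non-injective indecomposable projective $\Gamma$-module as a direct summand. Combining this with the fact that, by construction, $Q$ contains each nonisomorphic indecomposable projective-injective $\Gamma$-module, one concludes that $N=F'(X)\oplus Q$ admits a copy of every nonisomorphic indecomposable projective $\Gamma$-module as a direct summand; together with projectivity this shows that $N$ is a projective generator for $\mod\Gamma$.

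I do not anticipate a serious obstacle; the only delicate point is confirming that the bijection furnished by Lemma 3.1(1) on indecomposables is compatible with the decomposition of $\Lambda$ on the one side and the construction of $N$ on the other, and this is automatic from $F'$ being an additive equivalence together with the Krull--Schmidt property of the relevant subcategories.
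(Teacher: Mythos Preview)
Your proof is correct and is precisely the detailed version of what the paper intends: the paper merely states that the lemma ``follows immediately from Lemma~3.1'' without spelling out the argument, and your two-step verification (projectivity of $N$ via $F'(X)\in\add\mathcal{P}(\Gamma)_{\mathcal{I}}$, and generation via the Krull--Schmidt bijection on indecomposable non-injective projectives together with the construction of $Q$) is exactly how one unpacks that reference.
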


Before starting the next proposition, we recall from \cite{BS}: a $\Lambda$-module $\omega$ is said to {\it have faithful dimension $n$},
denoted by $\f_{\Lambda}\omega =n$, if $\mathrm{l.app}_{\omega}\Lambda =n$, where $n$ is a positive integer.
One of the applications of Theorem 3.5 is as follows.

\begin{proposition}
Let $\omega, \nu$ be as in Theorem 3.5. Then we have
$$\f _{\Lambda}\omega= \f _{\Gamma}\nu.$$
\end{proposition}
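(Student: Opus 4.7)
The plan is to reduce the proposition to a direct application of Theorem 3.5 with $M=\Lambda$, and then transfer the resulting equality from the auxiliary module $N$ to $\Gamma$ itself via an additive-closure argument.

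First I would use Lemma 3.6 (or rather, the decomposition underlying it) to write the regular module as $\Lambda = X_0 \oplus P_0$, where $X_0 \in \add\mathcal{P}(\Lambda)_{\mathcal{I}}$ collects the indecomposable projective summands that are not injective and $P_0$ is projective-injective. Since no summand of $X_0$ is injective, $X_0 \in \mod_{\mathcal{I}}\Lambda$. Thus $\Lambda$ already has the shape $Y \oplus I' \oplus P'$ required by the hypothesis on $M$ in Theorem 3.5, with $Y = X_0$, $I' = 0 \in \add\mathcal{I}(\Lambda)_{\mathcal{P}}$ and $P' = P_0$.

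Next, I would take the free parameter $Q'$ appearing in Theorem 3.5 to be the direct sum of all nonisomorphic indecomposable projective-injective $\Gamma$-modules, and put $N = F'(X_0) \oplus F(0) \oplus Q' = F'(X_0) \oplus Q'$. Then Theorem 3.5 delivers
$$\mathrm{l.app}_{\omega}\Lambda = \mathrm{l.app}_{\nu} N.$$
By Lemma 3.6 applied on the $\Gamma$-side, $N$ is a projective generator of $\mod \Gamma$, so $\add N = \add \Gamma$.

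Finally, I would establish $\mathrm{l.app}_{\nu} N = \mathrm{l.app}_{\nu}\Gamma$ from the general observation that $\mathrm{l.app}_{\nu}(-)$ depends only on the additive closure: by Lemma 2.1(2) the invariant does not decrease when one passes to a direct summand, and conversely taking $s$ copies of an $\add\nu$-approximation sequence of a module $W$ produces one of $W^s$ of the same length, so $\mathrm{l.app}_{\nu} W^s = \mathrm{l.app}_{\nu} W$. Combining these two facts with the mutual relations $N \mid \Gamma^{k}$ and $\Gamma \mid N^{l}$ forces equality. Stringing everything together gives $\f_{\Lambda}\omega = \mathrm{l.app}_{\omega}\Lambda = \mathrm{l.app}_{\nu} N = \mathrm{l.app}_{\nu}\Gamma = \f_{\Gamma}\nu$.

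I do not anticipate a genuine obstacle here: the proposition is essentially a packaging result, with Theorem 3.5 supplying all the heavy lifting and Lemma 3.6 ensuring that the module $N$ produced on the $\Gamma$-side is additively equivalent to $\Gamma$. The only point requiring any care is the invariance of $\mathrm{l.app}_{\nu}$ under additive equivalence, which is immediate from Lemma 2.1(2).
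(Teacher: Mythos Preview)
Your proposal is correct and follows essentially the same route as the paper's proof: decompose $\Lambda$ into its non-injective and projective-injective projective parts, apply Theorem~3.5 with $M=\Lambda$, invoke Lemma~3.6 to obtain $\add N=\add\Gamma$, and conclude $\mathrm{l.app}_{\nu}N=\mathrm{l.app}_{\nu}\Gamma$. The only difference is that you spell out the additive-closure argument behind that last equality, whereas the paper simply asserts it is easy to see.
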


\begin{proof}
Decompose $\Lambda$ as $\Lambda=X\oplus P'$, where $X\in \add \mathcal{P}(\Lambda)_{\mathcal{I}}$ and $P'$ is a projective-injective $\Lambda$-module. 
Define $N=F'(X)\oplus Q$, where $Q$ is the direct sum of all nonisomorphic indecomposable projective-injective $\Gamma$-modules. By Lemma 3.6, one has $\add N= \add\Gamma$. Hence, it is easy to see that $\mathrm{l.app}_{\nu}\Gamma=\mathrm{l.app}_{\nu}N$.
By Theorem 3.5 and the definition of faithful dimensions of modules, we obtain $\f_{\Lambda}\omega=\mathrm{l.app}_{\omega}\Lambda=\mathrm{l.app}_{\nu}N=\mathrm{l.app}_{\nu}\Gamma=\f_{\Gamma}\nu$ as desired.
\end{proof}

Let $\Lambda$ be an Artin algebra and  $M$  a $\Lambda$-module. If
$$0\to M\to I_1\to I_2\to \cdots \to I_n\to \cdots$$
is a minimal injective resolution of $M$, then we say that $M$ {\em has dominant dimension $n$}, denoted by dom.dim$M=n$, provided that $I_i$ is projective for $1\leq i\leq n$, but $I_{n+1}$ is not projective. An application of Theorem 3.5 is the following corollary, which improves \cite[Theorem 2.2]{M1}.
\begin{corollary}
Let $M$ be a $\Lambda$-module with $M=Y\oplus I'\oplus P'$, where $Y\in \mod_{\mathcal{I}}\Lambda, I'\in \add \mathcal{I}(\Lambda)_{\mathcal{P}}$ and $P'$ is a projective-injective $\Lambda$-module. Taking
$N=F'(X)\oplus F(I')\oplus Q'$ with $Q'$ a projective-injective $\Gamma$-module, we have
$$\mathrm{dom.dim} M=\mathrm{dom.dim} N.$$
Furthermore, we have $$\mathrm{dom.dim} \Lambda=\mathrm{dom.dim} \Gamma.$$
\end{corollary}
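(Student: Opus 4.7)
The idea is to specialize Theorem~3.5 to $\omega=\omega_{\Lambda}$, the direct sum of all nonisomorphic indecomposable projective-injective $\Lambda$-modules. In the notation of Theorem~3.5 this corresponds to $X=0$, $I=0$, and $P=\omega_{\Lambda}$, so the hypothesis $\Ext^{1}_{\Lambda}(\omega,\omega)=0$ is automatic from the projectivity of $\omega_{\Lambda}$, and the associated module $\nu$ is exactly $\omega_{\Gamma}$, the corresponding basic projective-injective $\Gamma$-module.

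The main step is the identification
$$\mathrm{l.app}_{\omega_{\Lambda}}M=\mathrm{dom.dim}\,M$$
for every $\Lambda$-module $M$ (and the analogous statement over $\Gamma$). For $\mathrm{dom.dim}\,M\leq \mathrm{l.app}_{\omega_{\Lambda}}M$, I would take the initial segment $0\to M\to I_{1}\to\cdots\to I_{n}$ of the minimal injective resolution of $M$ with all $I_{i}$ projective-injective; since each $I_{i}\in\add\omega_{\Lambda}$ is injective, the inclusion $\m f_{i}\hookrightarrow I_{i}$ is both the injective envelope and a left $\add\omega_{\Lambda}$-approximation (every $\Lambda$-homomorphism from $\m f_{i}$ into an object of $\add\omega_{\Lambda}$ extends along the envelope by injectivity). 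For the reverse inequality, given a witnessing complex for $\mathrm{l.app}_{\omega_{\Lambda}}M\geq n$, I would pass to left minimal $\add\omega_{\Lambda}$-approximations via Lemma~2.1(1); the injection $M\hookrightarrow\omega_{1}$ with $\omega_{1}$ injective then forces the injective envelope $E(M)$ to split off as a direct summand of $\omega_{1}$, hence $E(M)\in\add\omega_{\Lambda}$. Iterating on the consecutive cokernels shows that the first $n$ terms of the minimal injective resolution of $M$ are projective-injective, i.e.\ $\mathrm{dom.dim}\,M\geq n$.

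The first assertion then follows immediately from Theorem~3.5: with $\omega=\omega_{\Lambda}$ and $\nu=\omega_{\Gamma}$, the prescribed $N=F'(Y)\oplus F(I')\oplus Q'$ is exactly of the form required by that theorem, so
$$\mathrm{dom.dim}\,M=\mathrm{l.app}_{\omega_{\Lambda}}M=\mathrm{l.app}_{\omega_{\Gamma}}N=\mathrm{dom.dim}\,N.$$
For the ``furthermore'' part, I would apply the first part to $M={}_{\Lambda}\Lambda=X\oplus P'$ decomposed as in Lemma~3.6; the corresponding $N=F'(X)\oplus Q$ is a projective generator for $\Gamma$-modules by Lemma~3.6, so $\add N=\add\Gamma$, and since dominant dimension depends only on the $\add$-closure of a module we conclude $\mathrm{dom.dim}\,\Lambda=\mathrm{dom.dim}\,N=\mathrm{dom.dim}\,\Gamma$.

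The main obstacle is the careful verification of the identification $\mathrm{l.app}_{\omega_{\Lambda}}M=\mathrm{dom.dim}\,M$, in particular the $\leq$ direction, where one must invoke left minimality to guarantee that each envelope $E(\m f_{i})$ genuinely lies in $\add\omega_{\Lambda}$ rather than merely embedding into some $\omega_{i}\in\add\omega_{\Lambda}$. Once that identification is in place, both assertions of the corollary reduce to a direct application of Theorem~3.5 together with Lemma~3.6.
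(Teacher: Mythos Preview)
Your proposal is correct and follows essentially the same approach as the paper: specialize Theorem~3.5 to $\omega$ equal to the basic projective-injective $\Lambda$-module, identify $\mathrm{l.app}_{\omega}M$ with $\mathrm{dom.dim}\,M$, and invoke Lemma~3.6 for the second assertion. The paper merely asserts the identification $\mathrm{dom.dim}\,M=\mathrm{l.app}_{P}M$ as ``not difficult to check'', whereas you supply the details; your $\leq$ direction (via the splitting $\omega_1\cong E(M)\oplus I'$ and iteration on direct summands of cokernels) is the right argument, and Lemma~2.1(2) is what makes the iteration go through.
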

\begin{proof}
    Let $P$ be the direct sum of all nonisomorphic indecomposable projective-injective $\Lambda$-modules. It is not difficult to check that $\mathrm{dom.dim}\Lambda= \mathrm{l.app}_{P}\Lambda$ and $\mathrm{dom.dim}M=\mathrm{l.app}_{P}M$, for any $\Lambda$-module $M$.
    This corollary follows directly from Theorem 3.5 and Lemma 3.6.
\end{proof}

\subsection{Wakamatsu tilting conjecture under stable equivalences}
In this subsection, we will give some applications of the results in Subsection 3.1. We establish explicit one-to-one correspondences between basic Wakamatsu tilting modules under stable equivalences of Artin algebras that have neither nodes nor semisimple direct summands, and further demonstrate that the Wakamatsu tilting conjecture maintains its validity under such equivalences.

\begin{proposition}
    Let $\omega=X\oplus I\oplus P$ be a Wakamatsu tilting $\Lambda$-module, where $X\in \mod_{\mathcal{I}}\Lambda$, $I\in\add \mathcal{I}(\Lambda)_{\mathcal{P}}$, and $P$ is a projective-injective $\Lambda$-module. Put $\nu=F'(X)\oplus F(I)\oplus Q$, where $Q$ is the direct sum of all nonisomorphic indecomposable projective-injective $\Gamma$-modules. Then $\nu$  is a Wakamatsu tilting $\Gamma$-module.
\end{proposition}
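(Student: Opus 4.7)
The plan is to check the two defining properties of a Wakamatsu tilting module directly for $\nu$: self-orthogonality $\Ext_\Gamma^{\geq 1}(\nu,\nu)=0$ and infinite faithful dimension $\f_\Gamma\nu=\infty$. Both have already been set up by the preceding machinery, so the proof should be short.

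First I would verify self-orthogonality. Since $\omega=X\oplus I\oplus P$ and $\nu=F'(X)\oplus F(I)\oplus Q$ with $X\in\mod_{\mathcal{I}}\Lambda$, $I\in\add\mathcal{I}(\Lambda)_{\mathcal{P}}$, $P$ projective-injective and $Q$ a direct sum of indecomposable projective-injective $\Gamma$-modules, the pair $(\omega,\nu)$ fits exactly the hypothesis of Lemma 3.3 (taking both $A=A'=\omega$ and $B=B'=\nu$). Hence for every $n\geq 1$,
\[
\Ext_\Gamma^{n}(\nu,\nu)\;\cong\;\Ext_\Lambda^{n}(\omega,\omega)\;=\;0,
\]
the last equality because $\omega$ is Wakamatsu tilting, hence self-orthogonal.

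Next I would invoke Proposition 3.7 to transfer the faithful dimension. Since $\omega$ is Wakamatsu tilting, $\f_\Lambda\omega=\infty$, and Proposition 3.7 gives $\f_\Gamma\nu=\f_\Lambda\omega$. Note that the equality in Proposition 3.7 is derived from Theorem 3.5, whose formulation accommodates the value $\infty$ (the definition of $\mathrm{l.app}_\omega(-)$ explicitly includes the infinite case). Consequently $\f_\Gamma\nu=\infty$.

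Combining the two bullets, $\nu$ is self-orthogonal with $\f_\Gamma\nu=\infty$, so by the definition recalled in Section 2, $\nu$ is a Wakamatsu tilting $\Gamma$-module. I do not anticipate any real obstacle here: Lemma 3.3 handles the Ext-vanishing and Proposition 3.7 handles the faithful dimension, so the only care needed is to confirm that $\nu$ is presented in the precise decomposed form required to apply both results, which it is by construction.
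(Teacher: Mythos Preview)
Your proposal is correct and follows exactly the paper's own argument: the paper's proof simply notes that being Wakamatsu tilting means $\Ext_\Lambda^{\geq 1}(\omega,\omega)=0$ and $\f_\Lambda\omega=\infty$, and then invokes Lemma~3.3 and Proposition~3.7, precisely as you do.
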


\begin{proof}
By the definition of a Wakamatsu tilting module, $\omega$ is a Wakamatsu tilting module if and only if $\Ext_{\Lambda}^{\geq 1}(\omega,\omega)=0$ and
$\f_{\Lambda}\omega=\infty$. This proposition follows directly from Lemma 3.3 and Proposition 3.7.
\end{proof}

\begin{proposition}
Let $\omega=X\oplus I\oplus P$ be a $\Lambda$-module  with $X\in \mod_{\mathcal{I}}\Lambda$,
    $I\in \add \mathcal{I}(\Lambda)_{\mathcal{P}}$ and $P$ a projective-injective $\Lambda$-module, and let $\nu=F'(X)\oplus F(I)\oplus Q$, where $Q$ is the direct sum of all nonisomorphic indecomposable projective-injective $\Gamma$-modules. If $\omega$ is a tilting $\Lambda$-module, then $\nu$ is a tilting $\Gamma$-module.
\end{proposition}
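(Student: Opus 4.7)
The plan is to check that $\nu$ satisfies the three defining conditions of a tilting module: self-orthogonality, finite projective dimension, and existence of a finite $\add \nu$-coresolution of $\Gamma$. The first is immediate from Lemma 3.3, giving $\Ext_{\Gamma}^i(\nu,\nu)\cong \Ext_{\Lambda}^i(\omega,\omega)=0$ for all $i\geq 1$. For the finite projective dimension, the cleanest route is again via Lemma 3.3: any $\Gamma$-module $N$ corresponds, up to projective-injective summands (which do not contribute to higher $\Ext$), to a $\Lambda$-module $M$ with $\Ext_{\Gamma}^i(\nu,N)\cong \Ext_{\Lambda}^i(\omega,M)$, whence $\pd_{\Gamma}\nu=\pd_{\Lambda}\omega<\infty$; alternatively, one can invoke the preservation of projective dimension under the present kind of stable equivalence noted in the introduction.

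For the coresolution, I would exploit the tilting sequence $0\to \Lambda\to \omega_0\to \cdots\to \omega_n\to 0$ together with Lemma 3.4(2). Decompose $\Lambda=X'\oplus P'$ with $X'\in\add \mathcal{P}(\Lambda)_{\mathcal{I}}$ and $P'$ projective-injective. Since $P'$ is injective, the component $P'\hookrightarrow \omega_0$ of the tilting inclusion splits off $P'$ as a direct summand of $\omega_0$, so $\omega_0=P'\oplus \omega_0''$ with $\omega_0''\in \add \omega$. Quotienting by $P'$ yields an exact sequence
$$0\to X'\to \omega_0''\to \omega_1\to \cdots \to \omega_n\to 0$$
with all middle terms in $\add \omega$. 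Applying Lemma 3.4(2) translates this into an exact sequence
$$0\to F'(X')\to \nu_0\to \nu_1\to \cdots \to \nu_n\to 0$$
with each $\nu_i\in\add \nu$. Padding the first two slots by the identity on $Q$ produces a finite $\add \nu$-coresolution of $F'(X')\oplus Q$, which by Lemma 3.6 shares its additive closure with $\Gamma$.

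The main obstacle is the final step: passing from the coresolution of the projective generator $F'(X')\oplus Q$ to one of $\Gamma$ itself. This is handled by the standard fact that, for a self-orthogonal module $\nu$, the class of modules admitting a finite $\add \nu$-coresolution is closed under direct summands; applying this closure property (together with Krull--Schmidt to locate $\Gamma$ inside some power of $F'(X')\oplus Q$) extracts the required coresolution of $\Gamma$, and thereby completes the verification that $\nu$ is tilting.
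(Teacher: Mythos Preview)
Your proof is correct and follows essentially the same strategy as the paper: verify self-orthogonality via Lemma 3.3, bound the projective dimension, and build a finite $\add\nu$-coresolution of $\Gamma$ by transporting the tilting sequence through Lemma 3.4(2) together with Lemma 3.6.

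The only organisational difference worth recording is where the ``direct-summand closure'' step is performed. You split off the projective-injective part of $\Lambda$ \emph{before} transporting (getting a coresolution of $X'$), push it across, and then on the $\Gamma$-side invoke closure of finite $\add\nu$-coresolutions under summands to pass from the generator $F'(X')\oplus Q$ to $\Gamma$. The paper instead starts on the $\Gamma$-side: it writes $\Gamma=Q'\oplus I'$ with $Q'\in\add\mathcal{P}(\Gamma)_{\mathcal{I}}$, takes the $F'$-preimage $P'\in\add\mathcal{P}(\Lambda)_{\mathcal{I}}$, extracts a finite $\add\omega$-coresolution of $P'$ from that of $\Lambda$ (the summand step, handled via Lemma 2.1), transports via Lemma 3.4(2), and finally adds the projective-injective $I'\in\add\nu$ back in. Both routes use the same closure-under-summands fact; the paper applies it over $\Lambda$, you over $\Gamma$. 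For the projective dimension, the paper cites Mart\'inez-Villa's preservation result directly after stripping off the projective summands of $\nu$, whereas your $\Ext$-comparison through Lemma 3.3 reaches the same conclusion by a slightly longer path.
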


\begin{proof}
Since $\omega$ is a tilting $\Lambda$-module by assumption, we have $\Ext_{\Lambda}^{i}(\omega,\omega)=0$ for all $i\geq 1$. According to Lemma 3.3, it follows that $\Ext_{\Gamma}^{\geq 1}(\nu, \nu)=0$.

Since $X\in \mod_{\mathcal{I}}\Lambda$, we can write $X=K\oplus P'$, where $K\in \mod_{\mathcal{P}}\Lambda\cap \mod_{\mathcal{I}}\Lambda$
 and $P'\in \add \mathcal{P}(\Lambda)_{\mathcal{I}}$. Thus, one gets  $\omega=K\oplus P'\oplus I\oplus P$  and $\nu\cong  F'(K)\oplus F'(P')\oplus F(I)\oplus Q$,
  by \cite[Section 8]{AR2}.
We have $F'(K)\cong F(K)$ and both $F'(P')$ and $Q$ are projective $\Gamma$-modules by assumption and Lemma 3.1.
Therefore, $\pd \nu=\pd (F(K)\oplus F(I))=\pd F(K\oplus I)=\pd(K\oplus I)=\pd \omega=n$, for a positive integer $n$, by \cite[Corollary P4150]{M1}.

Putting $\Gamma=Q'\oplus I'$ with $Q'\in \add \mathcal{P}(\Gamma)_{\mathcal{I}}$ and $I'$ a projective-injective $\Gamma$-module, there exists $P'\in \add\mathcal{P}(\Lambda)_{\mathcal{I}}$ such that $Q'=F'(P')$ by Lemma 3.1(1).
Noting that $\omega$ is a tilting $\Lambda$-module by assumption, there exists an exact sequence in $\mod \Lambda:$
$$0\to \Lambda \to \omega_0 \to \omega_1\to \cdots \to \omega_n\to 0$$
with $\omega_i\in \add \omega$.
Due to Lemma 2.1(2), there exists an exact sequence
$$0\to P'\to \omega_0'\to \omega_1'\to \cdots \to \omega_n'\to 0$$
with $\omega_i'\in \add \omega$ for any $0\leq i\leq n$. By Lemma 3.4(2),  one obtains a long exact sequence in $\mod\Gamma$
$$0\to Q'\to \nu_0\to \nu_1\to \cdots \to \nu_n\to 0$$
with $\nu_i\in \add \nu$ for each $0\leq i\leq n.$
Note that $I'\in \add \nu$ by assumption, then we have an  exact sequence
$$0\to \Gamma \to \nu_0\oplus I' \to \nu_1\to \nu_2\to \cdots \to \nu_n \to 0$$

Thus, we obtain that $\nu$ is a tilting $\Gamma$-module by the above discussion.
\end{proof}

 Let $P$ be the direct sum of all nonisomorphic indecomposable projective-injective $\Lambda$-modules, and let $Q$ be the direct
sum of all nonisomorphic indecomposable projective-injective $\Gamma$-modules.
The following correspondence is given as follows:

$\Phi: \mod \Lambda \to \mod \Gamma $

defined by $ \Phi(\omega) = F'(X)\oplus F(I)\oplus Q $,
if $\omega=X\oplus I\oplus P'$ with $X\in \mod_{\mathcal{I}}\Lambda, I\in \add \mathcal{I}(\Lambda)_{\mathcal{P}}$
and $P'$ a projective-injective $\Lambda$-module.

And $\Psi: \mod\Gamma \to \mod \Lambda$ \\
defined by $\Psi(\nu) =F'^{-1}(Y)\oplus F^{-1}(J)\oplus P$,
if $\nu=Y\oplus J\oplus Q'$ with $Y\in \mod_{\mathcal{I}}\Gamma, J\in \add \mathcal{I}(\Gamma)_{\mathcal{P}}$
and $Q'$ a projective-injective $\Gamma$-module.

Recall that a $\Lambda$-module is said to be {\it basic} if it is a direct sum of nonisomorphic indecomposable modules.
We use $\mathcal{WT}(\Lambda)$ to denote the subclass of $\mod\Lambda$ consisting of all nonisomorphic basic Wakamatsu tilting $\Lambda$-modules,
 and use $\mathcal{T}(\Lambda)$ to denote the subclass of $\mod\Lambda$ consisting of all nonisomorphic basic tilting $\Lambda$-modules.
We have the following result.

\begin{theorem}
    The correspondences $\Phi$ and $\Psi$ restrict to one-to-one correspondences between $\mathcal{WT}(\Lambda)$ and $\mathcal{WT}(\Gamma)$,
   $\mathcal{T}(\Lambda)$ and $\mathcal{T}(\Gamma)$, respectively.
\end{theorem}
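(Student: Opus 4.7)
The strategy is to verify three things: that $\Phi$ sends $\mathcal{WT}(\Lambda)$ (respectively $\mathcal{T}(\Lambda)$) into $\mathcal{WT}(\Gamma)$ (respectively $\mathcal{T}(\Gamma)$); that $\Psi$ is the analogous map in the opposite direction; and that $\Psi\circ\Phi$ and $\Phi\circ\Psi$ are the identity maps on the appropriate subclasses. The preservation of the Wakamatsu tilting property by $\Phi$ is exactly Proposition 3.9, and the preservation of the tilting property is exactly Proposition 3.10; the corresponding statements for $\Psi$ follow by the symmetric argument, since $F^{-1}\colon \underline{\mod}\Gamma\to\underline{\mod}\Lambda$ is itself a stable equivalence with associated $F'^{-1}$, and all hypotheses on $(\Lambda,\Gamma)$ are symmetric.

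Next I would verify that $\Phi$ and $\Psi$ preserve basicness. By Krull--Schmidt, a basic $\omega\in\mod\Lambda$ admits a unique decomposition $\omega = X\oplus I\oplus P'$ with $X\in\mod_{\mathcal{I}}\Lambda$, $I\in\add\mathcal{I}(\Lambda)_{\mathcal{P}}$, and $P'$ projective-injective, in which the three summands share no common indecomposable direct summand. Since $F'$ and $F$ are additive equivalences, they preserve indecomposables and isomorphism classes; hence, by Lemma 3.1, $F'(X)$ and $F(I)$ are basic modules in $\mod_{\mathcal{I}}\Gamma$ and $\add\mathcal{I}(\Gamma)_{\mathcal{P}}$, respectively. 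Because $F'(X)$ has no injective summand, $F(I)$ has no projective summand, and $Q$ is projective-injective, the three pieces of $\Phi(\omega)=F'(X)\oplus F(I)\oplus Q$ remain pairwise disjoint, so $\Phi(\omega)$ is basic. The argument for $\Psi$ is symmetric.

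The key step is the inverse relation. For a basic $\omega = X\oplus I\oplus P'\in\mathcal{WT}(\Lambda)$, unwinding the definitions gives
\[
\Psi(\Phi(\omega)) = F'^{-1}(F'(X)) \oplus F^{-1}(F(I)) \oplus P = X \oplus I \oplus P,
\]
where $P$ is the direct sum of all nonisomorphic indecomposable projective-injective $\Lambda$-modules. Thus we need $P' = P$, i.e.\ every indecomposable projective-injective $\Lambda$-module must actually occur as a summand of $\omega$. This is the main obstacle, and I would overcome it as follows: since $\f_{\Lambda}\omega=\infty$, the definition of $\omega$-left approximation dimension produces a monomorphism $\Lambda\hookrightarrow\omega_0$ with $\omega_0\in\add\omega$; if $P_0$ is any indecomposable projective-injective $\Lambda$-module, then $P_0$ is a summand of $\Lambda$, so the restricted inclusion $P_0\hookrightarrow\omega_0$ splits by injectivity of $P_0$, forcing $P_0\in\add\omega$. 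Hence $P\in\add\omega$, and comparing with the Krull--Schmidt decomposition yields $P'=P$, so $\Psi\circ\Phi=\id$ on $\mathcal{WT}(\Lambda)$. The identity $\Phi\circ\Psi=\id$ on $\mathcal{WT}(\Gamma)$ is symmetric.

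The tilting case is a restriction of the Wakamatsu tilting case: every tilting module is Wakamatsu tilting, Proposition 3.10 guarantees that $\Phi$ and $\Psi$ preserve the tilting class on the nose, and the identities $\Psi\circ\Phi=\id$ on $\mathcal{T}(\Lambda)$ and $\Phi\circ\Psi=\id$ on $\mathcal{T}(\Gamma)$ are immediate corollaries of the Wakamatsu tilting case proved above. Once the splitting argument of the previous paragraph is in hand, all remaining verifications reduce to bookkeeping with the Krull--Schmidt decomposition, completing the proof.
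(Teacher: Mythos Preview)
Your proof is correct and follows the same route as the paper, which simply writes ``It follows directly from Proposition 3.9 and Proposition 3.10.'' You have in fact been more careful than the paper: the observation that every indecomposable projective-injective $\Lambda$-module must occur as a summand of any basic Wakamatsu tilting module (via the splitting of $P_0\hookrightarrow\omega_0$) is exactly what is needed to make $\Psi\circ\Phi=\id$ well-defined, and the paper leaves this step entirely implicit.
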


\begin{proof}
    It follows directly from Proposition 3.9 and Proposition 3.10.
\end{proof}

\begin{theorem}
 $\Gamma$ satisfies the Wakamatsu tilting conjecture if and only if $\Lambda$ does so.
\end{theorem}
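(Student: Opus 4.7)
The plan is to combine the bijections from Theorem 3.11 with the projective-dimension calculation already contained in the proof of Proposition 3.10. By the symmetric roles of $\Lambda$ and $\Gamma$, it suffices to prove one implication; I shall show that if $\Lambda$ satisfies the Wakamatsu tilting conjecture, then $\Gamma$ does as well.

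Assume $\Lambda$ satisfies the conjecture, and let $\nu$ be a Wakamatsu tilting $\Gamma$-module with $\pd_{\Gamma}\nu<\infty$. Since being Wakamatsu tilting, being tilting, and having finite projective dimension all depend only on $\add \nu$, we may replace $\nu$ by its basic version and assume $\nu\in\mathcal{WT}(\Gamma)$. Decompose $\nu=Y\oplus J\oplus Q'$ with $Y\in\mod_{\mathcal{I}}\Gamma$, $J\in\add \mathcal{I}(\Gamma)_{\mathcal{P}}$, and $Q'$ projective-injective, and set $\omega=\Psi(\nu)=F'^{-1}(Y)\oplus F^{-1}(J)\oplus P$. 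Theorem 3.11 gives $\omega\in\mathcal{WT}(\Lambda)$.

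The crucial step is to transfer the finiteness of projective dimension from $\nu$ to $\omega$. Write $Y=K\oplus P''$ with $K\in\mod_{\mathcal{P}}\Gamma\cap\mod_{\mathcal{I}}\Gamma$ and $P''\in\add \mathcal{P}(\Gamma)_{\mathcal{I}}$; then Lemma 3.1 identifies $F'^{-1}(K)\cong F^{-1}(K)$ and lets us rewrite $\omega$ so that its projective and projective-injective direct summands contribute zero to projective dimension. Repeating verbatim the computation used inside the proof of Proposition 3.10, and invoking \cite[Corollary]{M1} to identify $\pd_{\Lambda}F^{-1}(-)$ with $\pd_{\Gamma}(-)$ on modules without projective or injective summands, we obtain
\[
\pd_{\Lambda}\omega=\pd_{\Lambda}\bigl(F^{-1}(K)\oplus F^{-1}(J)\bigr)=\pd_{\Gamma}(K\oplus J)=\pd_{\Gamma}\nu<\infty.
\]
Thus $\omega$ is a Wakamatsu tilting $\Lambda$-module of finite projective dimension, and by the hypothesis on $\Lambda$ we conclude $\omega\in\mathcal{T}(\Lambda)$.

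Finally, Theorem 3.11 restricts to a bijection $\mathcal{T}(\Lambda)\leftrightarrow\mathcal{T}(\Gamma)$, and since $\Phi$ and $\Psi$ are mutually inverse on basic modules we obtain $\nu=\Phi(\omega)\in\mathcal{T}(\Gamma)$, as required. The reverse implication is obtained by swapping the roles of $\Lambda,\Gamma$ and of $\Phi,\Psi$. The only step that is not entirely formal is the projective-dimension transfer in the third paragraph; however, this computation is already embedded in the proof of Proposition 3.10, so no additional ingredient is needed beyond the bijections of Theorem 3.11 and Lemma 3.1.
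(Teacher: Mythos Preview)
Your proof is correct and follows essentially the same route as the paper: you use the bijection of Theorem 3.11 to pass between basic Wakamatsu tilting modules, transfer the projective dimension via the decomposition from Lemma 3.1 together with \cite[Corollary]{M1} (exactly the computation embedded in Proposition 3.10), and then invoke the tilting bijection of Theorem 3.11 to return. The only cosmetic difference is that you prove the implication $\Lambda\Rightarrow\Gamma$ while the paper writes out $\Gamma\Rightarrow\Lambda$, and you cite Proposition 3.10 for the projective-dimension step rather than repeating it inline.
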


\begin{proof} 
Assume that $\Gamma$ satisfies the Wakamatsu tilting conjecture.
Let $\omega$ be a basic Wakamatsu tilting $\Lambda$-module with $\pd_{\Lambda}\omega=n<\infty$.
Decompose $\omega$ as $\omega=X\oplus I\oplus P$, where $X\in \mod_{\mathcal{I}}\Lambda$,
    $I\in \add \mathcal{I}(\Lambda)_{\mathcal{P}}$, and $P$ is the direct sum of all nonisomorphic indecomposable projective-injective $\Lambda$-modules. Define
    $\nu=F'(X)\oplus F(I)\oplus Q$, where $Q$ is the direct sum of all nonisomorphic indecomposable projective-injective $\Gamma$-modules. 
    By Lemma 3.1 and Proposition 3.9, $\nu$ is a basic Wakamatsu tilting $\Gamma$-module.

On the other hand, decompose $X$ as $X=X_1\oplus P_1$ with $X_1\in \mod_{\mathcal{I}}\Lambda\cap \mod_{\mathcal{P}}\Lambda $ and
$P_1\in \add \mathcal{P}(\Lambda)_{\mathcal{I}}$. By Lemma 3.1(2),  one has $F'(X_1)\cong F(X_1)$. Thus, there are isomorphisms
$$\nu \cong F(X_1)\oplus F'(P_1)\oplus F(I_1)\oplus Q\cong F(X_1\oplus I)\oplus F'(P_1)\oplus Q.$$
Noting  that $\pd_{\Gamma}F(X_1\oplus I)=\pd_{\Lambda}(X_1\oplus I)$ by \cite[Corollary]{M1}, then one obtains $\pd_{\Gamma} \nu=\pd_{\Lambda}\omega=n$.
It follows that $\nu$ is a tilting $\Gamma$-module by assumption.
by Theorem 3.11, we conclude that $\omega\cong \Psi(\nu)$ is a basic tilting $\Lambda$-module as regards.

Analagously, we can prove that $\Lambda$ satisfies the Wakamatsu tilting conjecture when $\Gamma$ does.
\end{proof}

\subsection{Relative torsionfree modules}

In this subsection, we will give some applications of subsection 3.1 and 3.2. We will investigate some transfer properties of relative torsionfree modules
and that of modules having generalized Gorenstein dimension zero with respect to a Wakamatsu tilting module.
The following proposition is direct.

\begin{proposition}
Let $n$ be a positive integer, and let  $\omega=X\oplus I\oplus P$ be a $\Lambda$-module satisfying  $\Ext_{\Lambda}^{1}(\omega.\omega)=0$ 
and $\f_{\Lambda}\omega\geq n+2$,  where $X\in \mod_{\mathcal{I}}\Lambda, I\in \add\mathcal{I}(\Lambda)_{\mathcal{P}}$ and $P$ is a projective-injective $\Lambda$-module, and let $\nu=F'(X)\oplus F(I)\oplus Q$ with $Q$ the direct sum of all nonisomorphic  indecomposable  projective-injective $\Gamma$-modules.
 Suppose that  $M$ is  a $\omega$-$n$-torsionfree $\Lambda$-module with $M=Y\oplus I'\oplus P'$, where $Y\in \mod_{\mathcal{I}}\Lambda,
 I'\in \add \mathcal{I}(\Lambda)_{\mathcal{P}}$ and $P'$ is a projective-injective $\Lambda$-module. Taking
$N =F'(Y)\oplus F(I')\oplus Q'$ with $Q'$ a projective-injective $\Gamma$-module,
then  $N$ is a $\nu$-$n$-torsionfree $\Gamma$-module.
\end{proposition}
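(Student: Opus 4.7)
The plan is to unpack the defining conditions of Definition 2.2 for $N$ being $\nu$-$n$-torsionfree and verify each directly from the machinery already established in Subsection 3.1. Recall that, by Definition 2.2, calling a module $N$ a $\nu$-$n$-torsionfree $\Gamma$-module requires exactly two things: the ambient module $\nu$ must satisfy $\f_{\Gamma}\nu \geq n+2$, and the target module must satisfy $\mathrm{l.app}_{\nu} N = n$. So the strategy is to transfer each of these two conditions from the $\Lambda$-side to the $\Gamma$-side.

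First I would handle the faithful dimension of $\nu$. By hypothesis $\Ext_{\Lambda}^1(\omega,\omega)=0$ and $\f_{\Lambda}\omega \geq n+2$, and $\omega = X \oplus I \oplus P$ has precisely the decomposition required for Proposition 3.7 to apply. Invoking Proposition 3.7 immediately yields $\f_{\Gamma}\nu = \f_{\Lambda}\omega \geq n+2$, which confirms that the ambient module $\nu$ on the $\Gamma$-side meets the prerequisite in Definition 2.2.

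Next I would transfer the $\omega$-left approximation dimension of $M$. Since $M$ is $\omega$-$n$-torsionfree by assumption, Definition 2.2 gives $\mathrm{l.app}_{\omega} M = n$. The hypotheses on $M = Y \oplus I' \oplus P'$ and $N = F'(Y) \oplus F(I') \oplus Q'$ are precisely those required for Theorem 3.5 to apply to the pair $(\omega,M)$ and $(\nu,N)$. Therefore Theorem 3.5 gives $\mathrm{l.app}_{\nu} N = \mathrm{l.app}_{\omega} M = n$.

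Combining the two displayed identities $\f_{\Gamma}\nu \geq n+2$ and $\mathrm{l.app}_{\nu} N = n$ and reading Definition 2.2 on the $\Gamma$-side, we conclude $N$ is $\nu$-$n$-torsionfree. There is essentially no obstacle here: the real work was done in Theorem 3.5 and Proposition 3.7, and this proposition is a direct corollary. The only point to be careful about is matching the decomposition hypotheses of $\omega$, $\nu$, $M$, $N$ with the hypotheses of Theorem 3.5 and Proposition 3.7 verbatim, so that no side condition on nodes, semisimple summands, or the structure of the projective-injective summand $Q$ (taken as the sum of all nonisomorphic indecomposable projective-injective $\Gamma$-modules) is violated.
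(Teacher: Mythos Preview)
Your proposal is correct and follows exactly the same approach as the paper's own proof: invoke Proposition 3.7 to transfer the faithful dimension bound $\f_{\Gamma}\nu=\f_{\Lambda}\omega\geq n+2$, then invoke Theorem 3.5 to transfer $\mathrm{l.app}_{\nu}N=\mathrm{l.app}_{\omega}M=n$, and conclude via Definition 2.2. The paper's proof is a two-line version of precisely this argument.
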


\begin{proof}
Due to Proposition 3.7,  it follows that $\f_{\Gamma}\nu=\f_{\Lambda}\omega \geq n+2$. And, by Theorem 3.5, one gets $\mathrm{l.app}_{\nu}N=\mathrm{l.app}_{\omega}M=n$.
 This implies that $N$ is a $\nu$-$n$-torsionfree $\Gamma$-module as regards.
\end{proof}

It is well known that a $\Lambda$-$n$-torsionfree module is just the usual $n$-torsionfree module defined in \cite{AB}.
The next result follows immediately from Proposition 3.13.

\begin{corollary}
    Let $M$ be a $n$-torsionfree $\Lambda$-module with $M=Y\oplus I'\oplus P'$, where $Y\in \mod_{\mathcal{I}}\Lambda,
 I'\in \add \mathcal{I}(\Lambda)_{\mathcal{P}}$ and $P'$ is a projective-injective $\Lambda$-module. Taking
$N=F'(Y)\oplus F(I')\oplus Q'$ with $Q'$ some projective-injective $\Gamma$-module,
then  $N$ is a $n$-torsionfree $\Gamma$-module.
\end{corollary}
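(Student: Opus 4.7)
The plan is to derive this as the $\omega = \Lambda$ specialization of Proposition 3.13, and then translate from $\nu$-$n$-torsionfree to $\Gamma$-$n$-torsionfree using Lemma 3.6. The reason this works is that when $\omega = \Lambda$, the module $\nu$ produced by the machinery of Subsection 3.1 turns out to be Morita-equivalent (at the level of $\add$) to the regular $\Gamma$-module.

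First I would decompose the regular module. Since $\Lambda$ is projective, its $\add \mathcal{I}(\Lambda)_{\mathcal{P}}$ summand is zero, so we can write $\Lambda = X_0 \oplus P_0$ with $X_0 \in \add \mathcal{P}(\Lambda)_{\mathcal{I}}$ and $P_0$ projective-injective. This fits the form $\omega = X \oplus I \oplus P$ required by Proposition 3.13, with $I = 0$. Setting $\nu_0 = F'(X_0) \oplus Q$, where $Q$ is the direct sum of all nonisomorphic indecomposable projective-injective $\Gamma$-modules, Lemma 3.6 ensures that $\nu_0$ is a projective generator for $\Gamma$-modules; hence $\add \nu_0 = \add \Gamma$.

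Next I would verify the hypotheses of Proposition 3.13 for $\omega = \Lambda$. The condition $\Ext_{\Lambda}^{1}(\Lambda,\Lambda) = 0$ is immediate because $\Lambda$ is projective. The bound $\f_{\Lambda}\Lambda \geq n+2$ is inherited from the hypothesis on $M$: by the identification of classical $n$-torsionfree modules with $\Lambda$-$n$-torsionfree modules (the parenthetical comment following Definition 2.2), the very assumption that $M$ is $n$-torsionfree already presupposes $\f_{\Lambda}\Lambda \geq n+2$. Applying Proposition 3.13 with this choice of $\omega$ and $\nu_0$ then gives that the module $N = F'(Y) \oplus F(I') \oplus Q'$ is $\nu_0$-$n$-torsionfree, i.e.\ $\mathrm{l.app}_{\nu_0} N = n$, and Proposition 3.7 supplies $\f_{\Gamma}\nu_0 = \f_{\Lambda}\Lambda \geq n+2$.

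Finally, since $\add \nu_0 = \add \Gamma$, a left $\add \nu_0$-approximation is the same thing as a left $\add \Gamma$-approximation, so $\mathrm{l.app}_{\nu_0} N = \mathrm{l.app}_{\Gamma} N = n$ and $\f_{\Gamma}\Gamma = \f_{\Gamma}\nu_0 \geq n+2$. Therefore $N$ is $\Gamma$-$n$-torsionfree, which is exactly the classical notion of an $n$-torsionfree $\Gamma$-module, as desired. There is essentially no obstacle here beyond bookkeeping; the only point requiring a moment of care is checking that the faithful-dimension inequality $\f_{\Lambda}\Lambda \geq n+2$ is genuinely available from the assumption that $M$ is $n$-torsionfree, and that $\add \nu_0 = \add \Gamma$ lets us pass from the $\nu_0$-relative notion to the classical one.
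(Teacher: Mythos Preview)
Your proof is correct and follows essentially the same path as the paper's: decompose $\Lambda$ as a projective module with no $\add\mathcal{I}(\Lambda)_{\mathcal{P}}$ part, define $\nu$ accordingly, invoke Lemma~3.6 to obtain $\add\nu=\add\Gamma$, and apply Proposition~3.13. The paper is terser and does not pause to justify $\f_{\Lambda}\Lambda\geq n+2$ (which in any case holds trivially since $\f_{\Lambda}\Lambda=\infty$, $\Lambda$ being a Wakamatsu tilting module over itself), but the argument is identical.
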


\begin{proof}
    We take $\Lambda=X\oplus I$, where $X\in \add \mathcal{P}(\Lambda)_{\mathcal{I}}$ and $I$ is a projective-injective $\Lambda$-module. Putting $\nu=F'(X)\oplus Q$
    where $Q$ is the direct sum of all nonisomorphic indecomposable projective-injective $\Gamma$-modules. By Proposition 3.13 and Lemma 3.6, one has $N$ is a $\nu$-$n$-torsionfree
    $\Gamma$-module and $\add \nu =\add \Gamma$. This implies that $N$ is an $n$-torsionfree $\Gamma$-module as desired.
\end{proof}

%The next corollary is directly from Corollary 3.14.

\begin{corollary}
Let $\omega =X\oplus I\oplus P$ be a Wakamatsu tilting $\Lambda$-module with $X\in \mod_{\mathcal{I}}\Lambda$,
$I\in \add \mathcal{I}(\Lambda)_{\mathcal{P}}$ and $P$ a projective-injective $\Lambda$-module, and let $\nu=F'(X)\oplus F(I)\oplus Q$,
where $Q$ is the direct sum of all nonisomorphic indecomposable projective-injective $\Gamma$-modules.
Suppose that $M=Y\oplus I'\oplus P'$ is  a $\omega$-$\infty$-torsionfree $\Lambda$-module with $Y\in \mod_{\mathcal{I}}\Lambda$, $I'\in \add\mathcal{I}(\Lambda)_{\mathcal{P}}$ and $P'$ a projecitve-injective $\Lambda$-module. Taking $\nu=F'(Y)\oplus F(I')\oplus Q'$,
where $Q'$ is a projective-injective $\Gamma$-module, then $N$ is a $\nu$-$\infty$-torsionfree as a $\Gamma$-module.
\end{corollary}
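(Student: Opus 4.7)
The plan is to derive this as an essentially immediate consequence of Theorem 3.5 together with Proposition 3.9. The key observation is that by Definition 2.2, $M$ being $\omega$-$\infty$-torsionfree is by definition the single condition $\mathrm{l.app}_{\omega}M=\infty$ together with the standing hypothesis that $\f_{\Lambda}\omega\geq n+2$ for every positive integer $n$, i.e. $\f_{\Lambda}\omega=\infty$. Thus the whole statement is really just a statement about transferring the value $\infty$ of the left approximation dimension across the stable equivalence, which is exactly what Theorem 3.5 does.

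First I would verify that the hypotheses of Theorem 3.5 are in force. Since $\omega$ is Wakamatsu tilting it is self-orthogonal, so in particular $\Ext_{\Lambda}^{1}(\omega,\omega)=0$, which is the sole algebraic hypothesis of Theorem 3.5. The module $\omega$ already has the required decomposition $X\oplus I\oplus P$ given in the statement, and $M=Y\oplus I'\oplus P'$ is given in the form demanded by the theorem; likewise $\nu$ and $N$ are defined in the prescribed form.

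Next I would observe, via Proposition 3.9, that $\nu$ is a Wakamatsu tilting $\Gamma$-module, so $\f_{\Gamma}\nu=\infty$. This guarantees $\f_{\Gamma}\nu\geq n+2$ for every $n$, so that the notion ``$N$ is $\nu$-$\infty$-torsionfree $\Gamma$-module'' is well-defined.

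Finally, applying Theorem 3.5 directly yields $\mathrm{l.app}_{\nu}N=\mathrm{l.app}_{\omega}M$. Since $M$ is $\omega$-$\infty$-torsionfree the right-hand side is $\infty$, hence $\mathrm{l.app}_{\nu}N=\infty$, which is precisely the statement that $N$ is a $\nu$-$\infty$-torsionfree $\Gamma$-module. No genuine obstacle appears: all of the technical work has already been carried out in establishing Theorem 3.5 and Proposition 3.9, and this corollary is essentially a translation of Theorem 3.5 into the language of relative infinite-torsionfree modules.
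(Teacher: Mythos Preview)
Your proof is correct and essentially the same as the paper's. The paper invokes Proposition 3.9 to see that $\nu$ is Wakamatsu tilting and then cites Proposition 3.13 (whose proof is itself just Theorem 3.5 together with Proposition 3.7); you simply bypass the intermediate Proposition 3.13 and apply Theorem 3.5 directly, which amounts to the same argument with one citation layer unwrapped.
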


\begin{proof}
    According to Proposition 3.9, one has that $\nu$ is a Wakamatsu tilting $\Gamma$-module. Thus, it follows directly from Proposition 3.13.
\end{proof}

\begin{theorem}\label{main-theorem}
Let $\omega,\nu, M$ and $N$ be as in Corollary 3.15.  If $\G_{\omega}M=0$, then $\G_{\nu}N=0$.
\end{theorem}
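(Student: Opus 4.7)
The plan is to reduce the statement, via the characterization cited from \cite[Lemma 5.1]{BLZ}, to two conditions that have already been handled earlier in the paper. Recall that $\G_{\omega}M=0$ is equivalent to saying that $M$ is $\omega$-$\infty$-torsionfree together with $\Ext_{\Lambda}^{\geq 1}(M,\omega)=0$, and similarly $\G_{\nu}N=0$ is equivalent to $N$ being $\nu$-$\infty$-torsionfree together with $\Ext_{\Gamma}^{\geq 1}(N,\nu)=0$. Since $\omega$ is a Wakamatsu tilting $\Lambda$-module, Proposition 3.9 tells us $\nu$ is a Wakamatsu tilting $\Gamma$-module, so in particular $\f_{\Gamma}\nu=\infty$ and the characterization of $\nu$-$\infty$-torsionfreeness from \cite[Lemma 5.1]{BLZ} is available on the $\Gamma$-side.

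First, I would verify the $\infty$-torsionfreeness: by assumption $\G_{\omega}M=0$, so $M$ is $\omega$-$\infty$-torsionfree, and Corollary 3.15 then delivers that $N$ is $\nu$-$\infty$-torsionfree as a $\Gamma$-module. This is exactly the first of the two required conditions, and no further argument is needed here.

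Second, I would transfer the Ext-vanishing across the stable equivalence. From $\G_{\omega}M=0$ we have $\Ext_{\Lambda}^{n}(M,\omega)=0$ for every $n\geq 1$. Since $M$ and $\omega$ both have the decomposition shape required in Lemma 3.3 (namely, $M=Y\oplus I'\oplus P'$ with $Y\in\mod_{\mathcal{I}}\Lambda$, $I'\in\add\mathcal{I}(\Lambda)_{\mathcal{P}}$ and $P'$ projective-injective, and similarly for $\omega$), and their companions $N$ and $\nu$ are precisely the modules prescribed by that lemma, we obtain isomorphisms
$$\Ext_{\Gamma}^{n}(N,\nu)\cong \Ext_{\Lambda}^{n}(M,\omega)=0$$
for all $n\geq 1$. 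Combining these two facts with the $\mathcal{G}_{\nu}(\Gamma)$-characterization of \cite[Lemma 5.1]{BLZ} yields $\G_{\nu}N=0$, which is the desired conclusion.

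The argument is essentially a bookkeeping reduction to Corollary 3.15 and Lemma 3.3, so the only real point to be careful about is that the hypotheses of Lemma 3.3 on the summand types are met on both sides by $M,N$ and $\omega,\nu$; this is immediate from the standing decompositions in the statement. Hence I do not expect any genuine obstacle, and no new lemma appears necessary.
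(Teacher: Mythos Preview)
Your proposal is correct and follows essentially the same route as the paper: reduce $\G_{\omega}M=0$ via \cite[Lemma 5.1]{BLZ} to $\omega$-$\infty$-torsionfreeness plus $\Ext_{\Lambda}^{\geq 1}(M,\omega)=0$, then transfer these two conditions using Corollary 3.15 and Lemma 3.3 respectively. Your additional remark that Proposition 3.9 guarantees $\nu$ is Wakamatsu tilting (so the BLZ characterization is available on the $\Gamma$-side) is a helpful clarification that the paper leaves implicit.
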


\begin{proof}
    Due to \cite[Lemma 5.1]{BLZ}, one has that $\G_{\omega}M=0$ if and only if
   $\Ext_{\Lambda}^{\geq 1}(M,\omega)=0$ and $M$ is $\omega$-$\infty$-torsionfree.  
The theorem follows from Lemma 3.3 and Corollary 3.15.
\end{proof}

In case $_{\Lambda}\omega={_\Lambda}\Lambda$,  a $\Lambda$-module $M$ with $\G_{\omega}M=0$ is Gorenstein projective defined in \cite{EJ}. The next corollary is immediately from Theorem 3.16.

\begin{corollary}  Let $M$ and $N$ be as in Theorem \ref{main-theorem}.
    If $M$ is Gorenstein projective, then $N$ is so.
\end{corollary}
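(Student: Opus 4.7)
The plan is to prove Corollary 3.17 by applying Theorem 3.16 with the specific choice $\omega = {_\Lambda}\Lambda$. Since $M$ being Gorenstein projective is precisely $\G_{\Lambda}M = 0$ (by the identification recalled just before the corollary), this specialization is the natural one; the task then reduces to checking that the recipe of Theorem 3.16 produces a $\nu$ which, up to additive closure, equals ${_\Gamma}\Gamma$, so that $\G_{\nu}N = 0$ translates into Gorenstein projectivity of $N$.

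First I would decompose $\Lambda = X_0 \oplus P_0$, where $X_0 \in \add \mathcal{P}(\Lambda)_{\mathcal{I}}$ and $P_0$ is the direct sum of all nonisomorphic indecomposable projective-injective $\Lambda$-modules. The middle summand in $\add \mathcal{I}(\Lambda)_{\mathcal{P}}$ allowed by Theorem 3.16 is zero in this decomposition because $\Lambda$ is projective. The module ${_\Lambda}\Lambda$ is trivially a Wakamatsu tilting module, so the hypotheses of Theorem 3.16 are satisfied. Following the recipe of Theorem 3.16, set $\nu = F'(X_0) \oplus Q$, where $Q$ is the direct sum of all nonisomorphic indecomposable projective-injective $\Gamma$-modules; by Lemma 3.6 one has $\add \nu = \add \Gamma$.

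Now I apply Theorem 3.16 to this choice of $\omega$ and $\nu$, and to the given $M$, $N$ (decomposed as in the hypothesis of Corollary 3.15). Since $M$ is Gorenstein projective, $\G_{\Lambda}M = 0$, so Theorem 3.16 yields $\G_{\nu}N = 0$. The remaining step is to convert $\G_{\nu}N = 0$ into $\G_{\Gamma}N = 0$, i.e., into Gorenstein projectivity of $N$. By the characterization from \cite[Lemma 5.1]{BLZ} recalled in the excerpt, $\G_{\omega'}N = 0$ is equivalent to $N$ being $\omega'$-$\infty$-torsionfree together with $\Ext_{\Gamma}^{\geq 1}(N, \omega') = 0$. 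Since $\mathrm{l.app}_{\omega'}(-)$ depends only on $\add \omega'$ and $\add \nu = \add \Gamma$, the $\nu$-$\infty$-torsionfree condition coincides with the classical $\infty$-torsionfree condition over $\Gamma$; and $\Ext_{\Gamma}^{\geq 1}(N, \nu) = 0$ holds automatically because $\nu$ is projective. Hence $\G_{\nu}N = 0 \Leftrightarrow \G_{\Gamma}N = 0$, and so $N$ is Gorenstein projective.

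The main obstacle is mild: only the final $\add$-invariance translation between $\G_{\nu}$ and $\G_{\Gamma}$ warrants care, and it is immediate from the definitions of left approximation dimension and generalized Gorenstein dimension. No further homological input beyond Theorem 3.16 and Lemma 3.6 is required.
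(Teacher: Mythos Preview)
Your approach is correct and matches the paper's (implicit) argument: specialize Theorem 3.16 to $\omega={_\Lambda}\Lambda$, invoke Lemma 3.6 to get $\add\nu=\add\Gamma$, and translate $\G_{\nu}N=0$ into Gorenstein projectivity; this parallels exactly how Corollary 3.14 is deduced from Proposition 3.13. One small slip: the claim that $\Ext_{\Gamma}^{\geq 1}(N,\nu)=0$ ``holds automatically because $\nu$ is projective'' is false in general (projectivity in the second variable does not kill $\Ext$); rather, you already know $\Ext_{\Gamma}^{\geq 1}(N,\nu)=0$ from $\G_{\nu}N=0$, and then $\Gamma\in\add\nu$ gives $\Ext_{\Gamma}^{\geq 1}(N,\Gamma)=0$, which is what you need.
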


%\noindent{\bf Acknowledgments}

%\noindent The authors would like to thank the referees for their helpful comments and detailed suggestions.

\vspace{0.5cm}


\begin{thebibliography}{s2}

\bibitem{AB} M.Auslander, M., M. Bridger, , Stable module theory, Memoirs Amer. Math. Soc., 94, Providence'Rhode  Island: American
Mathematical Society, 1969
\bibitem{AR1} M. Auslander,  I. Reiten, Stable equivalence of Artin algebras. In: Proceedings of the Conference
 on Orders, Group Rings and Related Topics (Ohio State Univ., Columbus, Ohio, 1972), pp. 8-71.
Lecture Notes in Math. 353, Springer, Berlin, 1973.
 \bibitem{AR2} M. Auslander, I. Reiten, Stable equivalence of dualizing R-varieties, Adv. Math. 12 (1974)
306-366.


 \bibitem{ARS} M. Auslander, I. Reiten, and S. Smalo, {\it Representation theory of Artin algebras}, CambridgeStud. Adv. Math. 36, Cambridge University Press, 1995.

 \bibitem{ARS1} Auslander, M., Smal$\phi$, S.O., Preprojective modules over Artin algebras, J. Algebra, 1980, 66:
61-22.

 \bibitem{AR3} M. Auslander and I. Reiten, Cohen-Macaulay and Gorenstein artin algebras, Progress in
Math. (Birkhauser, Basel) 95 (1991), 221-245.

\bibitem{BB}S. Brenner and M. C. R. Butler, Generalizations of the Bernstein-Gelfand-Ponomarev reflection functors, Representation theory, II (Proc. Second Internat. Conf., Carleton Univ., Ottawa, Ont., 1979),
Lecture Notes in Math., 832, Springer, Berlin (1980), pp. 103-169.

\bibitem{BR} A. Beligiannis, I. Reiten, Homological and homotopical aspects of torsion theories, Mem. Amer. Math.
Soc., 188(883), (2007), viii+207 pp.

\bibitem{CG} H.X.Chen, M. Fang, O. Kerner, K.Yamagata. Rigidity dimension of algebras, Math. Proc. Camb. Phil. Soc. (2021), 170, 417–443.



\bibitem{BLZ} Y.H. Bao, J.F. L$\ddot{u}$, Relative torsionfreeness and Frobenius extension, arXiv: 2409.11892v1.

\bibitem{BS} A. B. Buan and oyvind Solberg, Relative cotilting theory and almost complete cotilting
modules, Algebras and modules, II(Geiranger, 1996), 77-92, CMS Conf. Proc., 24, Amer.
Math. Soc., Providence, RI, 1998.


\bibitem{E} H. Enomoto, Maximal self-orthogonal modules and a new generalization of tilting modules,
arXiv:2301.13498 [math.AC].
%\bibitem{CS}  X.W Chen, Long-Gang Sun, Singular equivalences of Morita type, preprint, 2012


%\bibitem{EJ} E.E. Enochs and O.M.G. Jenda, {\it Relative Homological Algebra},
%De Gruyter Exp. in Math. 30, Walter de Gruyter, Berlin, New York, 2000.

\bibitem{G} X.Q. Guo, Representation dimension: an invariant under stable equivalence, Trans. Amer. Math.
Soc. 357 (8) (2005) 3255--3263.

\bibitem{EM} H. Enomoto, Maximal self-orthogonal modules and a new generalization of tilting modules,
arXiv:2301.13498 [math.AC].% {\it J. Pure Appl. Algebra} {\bf 189} (2004) 167--193.

\bibitem{EJ} E.E. Enochs, O.M.G. Jenda, Gorenstein injective and projective modules, Math. Z. 220 (1995) 611–633.

\bibitem{GRS} E. L. Green, I. Reiten and $\Phi$. Solberg, Dualities on generalized Koszul algebras, Mem. Amer. Math.
Soc., 159(754), (2002), xvi+67 pp.

 \bibitem{Hu1}  Z.Y. Huang, $\omega$-$k$-torsionfree modules and$\omega$-$n$-left approximation
dimension, Science in China (Series A)(2000), 44(2): 184-192.

\bibitem{Hu2} Z.Y. Huang, Extension closure of k-torsionfree modules, Comm. Algebra 27 (1999), 1457-1464.

 \bibitem{LS} F. Li, Y.G. Sun, Invariants under stable equivalences of Morita type.
Acta Mathematica Scientia  2012, 32B(2):605–618.


 \bibitem{E} H. Enomoto, Maximal self-orthogonal modules and a new generalization of tilting modules,
arXiv:2301.13498 [math.AC].


 \bibitem{M}Y. Miyashita, Tilting modules of finite projective dimension, Math. Z., 193(1), (1986), 113-146.


\bibitem{M1} R. Martinez-Villa, Properties that are left invariant under stable equivalence. Comm.
Algebra. (1990) 18 (12): 4141-4169.

\bibitem{MR} F. Mantese and I. Reiten, Wakamatsu tilting modules, J. Algebra, 278(2), (2004), 532-552.

\bibitem{SZZ} J.X. Sun, J.L. Zheng, G.Q. Zhao,  Stable equivalences of tilted algebras, preprinted.
%\bibitem{Rt} J.J. Rotman, {\it An Introduction to Homological Algebra}, Second Edition, Universitext,
%Springer, New York, 2009.

%\bibitem{X} C. C. Xi, On the finitistic dimension conjecture. III. Related to the pair eAe$\subseteq$ A, J. Algebra 319 (2008), no.9, 3666--3688.

\bibitem{W}  T. Wakamatsu, On modules with trivial self-extensions, J. Algebra, 114(1), (1988), 106-114.
 \bibitem{W2} J. Wei, Auslander generators and homological conjectures, Glasg. Math. J., 56(3), (2014), 503-506.
 \bibitem{W3} J. Wei, Auslander bounds and homological conjectures, Rev. Mat. Iberoam., 27(3), (2011), 871-884.

\end{thebibliography}
\end{document}